 \theoremstyle{plain}
 \newtheorem{tm}{Theorem}
 \newtheorem{lm}[tm]{Lemma}
\newtheorem{cor}[tm]{Corollary}
 \newtheorem{prop}[tm]{Proposition}
 \theoremstyle{definition}
  \newtheorem{ex}[tm]{Example}
 \newcommand{\bN}{\ensuremath{\mathbb N}}
 \newcommand{\bZ}{\ensuremath{\mathbb Z}}
  \newcommand{\pf}{\mathop{\rm pf}\nolimits}
\newcommand{\dis}{\displaystyle}
\def\a{{\alpha}}
 \author{Valentina Barucci, Ralf Fr\"oberg, Mesut \c{S}ah\.{i}n}
 \thanks{The third author is supported by T\"{U}B\.{I}TAK-2219}
\begin{document}

\title{On free resolutions of some semigroup rings}
\begin{abstract}
For some numerical semigroup rings of small embedding dimension, namely those of embedding dimension 3, and symmetric or pseudosymmetric of embedding dimension 4, presentations has been determined in the literature. We extend these results  by giving the whole graded minimal free resolutions explicitly. Then we use these resolutions to determine some invariants of the
semigroups and  certain interesting relations among them. Finally, we determine semigroups of small embedding dimensions which have  strongly indispensable resolutions.
\end{abstract}

 \maketitle
 
\section{Introduction}
Let $S=\langle n_1,\ldots,n_k\rangle$ be a numerical semigroup, i.e., $n_i$ are positive integers with greatest common divisor $1$, and $S=\{\sum_{i=1}^ku_in_i : u_i \; \mbox{are nonnegative integers}\}$. Let $PF(S)=\{ n\in\bZ \setminus S:n+s\in S \; \mbox{for all} \; s\in S\setminus \{0\}\}$. The elements in $PF(S)$ are called the pseudofrobenius numbers
of $S$. Since $S$ is a numerical semigroup, $\bN\setminus S$ is finite. The largest integer $g(S)\notin S$ belongs to $PF(S)$ and is called
the Frobenius number of $S$. If $PF(S)=\{ g(S)\}$, $S$ is called symmetric, since then, for each $n\in\bZ$, exactly one of $n$ and $g(S)-n$ lies in $S$.
If $PF(S)=\{ g(S)/2,g(S)\}$, $S$ is called pseudosymmetric. Let $K$ be a field and $K[S]=K[t^{n_1},\ldots,t^{n_k}]$   be the semigroup ring of $S$, then $K[S]\simeq A/I_S$ where, $A=K[x_1,\ldots,x_k]$ and $I_S$ is the kernel of the surjection
$A \stackrel{\phi_0}{\longrightarrow} K[S]$, where $x_i\mapsto t^{n_i}$. If $\deg_S(x_i)=n_i$, this map is homogeneous of degree $0$. Throughout the paper, we drop $S$ in the notation and simply use $\deg(F)$ for a polynomial $F\in A$, except in the proof of Theorem \ref{thm4ci} where there are two semigroups involved.
$S$ is symmetric if and only if $k[S]$ is a Gorenstein ring \cite{ku}. If the embedding dimension $k$ is small, then $I_S$
has been determined in some cases, e.g. if $k=3$  by Herzog \cite{he}, if  $k=4$ and $S$ symmetric by Bresinsky \cite{br}, and if $k=4$ and $S$ pseudosymmetric by Komeda \cite{ko}. We will
determine a minimal graded $A$-resolution in these cases. In the first case the resolution was given by Denham \cite{den}. There is a concept, strong indispensability, which give a kind of uniqueness of the
minimal graded resolution. In the last section we classify semigroup rings of small embedding dimension which have strongly indispensable resolutions. The original motivation for strong indispensability comes from its applications in Algebraic Statistics, see e.g. \cite{tak}.

\section{Resolutions}
For completeness we start with $3$-generated symmetric semigroups. If $S$ is symmetric, then $K[S]$ is a complete intersection (\cite[Theorem 3.10]{he}), so the resolution is given by the Koszul complex. If $S$ is not symmetric, then we use Herzog's result.

\begin{tm}\cite[Proposition 3.2]{he}
Let $\alpha_i$, $1\le i\le 3$ be the smallest positive integer such that $\alpha_in_i\in\langle n_k,n_l\rangle$, $\{ i,k,l\}=\{ 1,2,3\}$,
and let $\alpha_in_i=\alpha_{ik}n_k+\alpha_{il}n_l$. Then $S=\langle n_1,n_2,n_3\rangle$ is $3$-generated not symmetric if and only if $\alpha_{ik}>0$ for all $i,k$,
$\alpha_{21}+\alpha_{31}=\alpha_1,\alpha_{12}+\alpha_{32}=\alpha_2,\alpha_{13}+\alpha_{23}=\alpha_3$. Then
$$K[S]=A/(f_1,f_2,f_3)$$

where $f_1=x_1^{\alpha_1}-x_2^{\alpha_{12}}x_3^{\alpha_{13}}$, $f_2=x_2^{\alpha_2}-x_1^{\alpha_{21}}x_3^{\alpha_{23}}$, $f_3=x_3^{\alpha_3}-x_1^{\alpha_{31}}x_2^{\alpha_{32}}.$
\end{tm}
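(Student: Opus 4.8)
The plan is to exploit the toric (lattice) structure of $I_S$ together with the Hilbert--Burch theorem, and to feed in elementary bookkeeping on the relations $\alpha_in_i=\alpha_{ik}n_k+\alpha_{il}n_l$. First I would record that each $f_i$ lies in $I_S$, which is immediate: $\phi_0(x_i^{\alpha_i})=t^{\alpha_in_i}=t^{\alpha_{ik}n_k+\alpha_{il}n_l}=\phi_0(x_k^{\alpha_{ik}}x_l^{\alpha_{il}})$. Since $K[S]$ is a one-dimensional Cohen--Macaulay domain, $I_S\subset A$ is a height-two perfect ideal, so by Hilbert--Burch its minimal free resolution has the shape $0\to A^{m-1}\to A^m\to A\to K[S]\to 0$ with $m=\mu(I_S)$, and $I_S$ is generated by the maximal minors of the syzygy matrix. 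By Kunz's theorem \cite{ku} $S$ is symmetric iff $K[S]$ is Gorenstein, and a grade-two Gorenstein ideal is a complete intersection; hence $S$ is symmetric iff $m=2$, and non-symmetric iff $m\geq 3$. This reduces the iff-statement to a count of minimal generators.

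The combinatorial core is a generation lemma: the $f_i$ generate $I_S$, so that $m\leq 3$. Because $I_S$ is generated by binomials $x^a-x^b$ with $a-b\in\ker(\bZ^3\to\bZ,\ e_i\mapsto n_i)$, it suffices to give a rewriting procedure: whenever a monomial carries an exponent $\geq\alpha_i$ in $x_i$, use $f_i$ to lower that exponent at the cost of the other two variables. I would show this terminates in a normal form, that two monomials representing the same element of $S$ reach the same normal form, and that the normal forms biject with $S$; the minimality of the $\alpha_i$ (an Ap\'ery-set count with respect to $n_1$) is what forces this. This yields $I_S=(f_1,f_2,f_3)$ and in particular $m\leq 3$.

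Next I would pin down the dichotomy. Minimality of $\alpha_i$ gives the inequalities $\alpha_{ji}<\alpha_i$ for every pair, and it shows that $\alpha_in_i$ is a \emph{pure} power of a single $n_j$ (some $\alpha_{ik}=0$) exactly when one $f_i$ becomes redundant and $m=2$; by the first paragraph this is precisely the symmetric case. Hence $S$ is non-symmetric iff all $\alpha_{ik}>0$, in which case the representation of each $\alpha_in_i$ is forced to be unique. Writing $v_i$ for the exponent vector of $f_i$, namely $v_1=(\alpha_1,-\alpha_{12},-\alpha_{13})$, $v_2=(-\alpha_{21},\alpha_2,-\alpha_{23})$, $v_3=(-\alpha_{31},-\alpha_{32},\alpha_3)$, the asserted relations $\alpha_{21}+\alpha_{31}=\alpha_1$, $\alpha_{12}+\alpha_{32}=\alpha_2$, $\alpha_{13}+\alpha_{23}=\alpha_3$ say exactly that $v_1+v_2+v_3=0$, equivalently that $f_1,f_2,f_3$ are the $2\times 2$ minors of
$$M=\begin{pmatrix} x_1^{\alpha_{21}} & x_2^{\alpha_{32}} & x_3^{\alpha_{13}} \\ x_2^{\alpha_{12}} & x_3^{\alpha_{23}} & x_1^{\alpha_{31}} \end{pmatrix}.$$
The $v_i$ lie in the rank-two lattice $\ker(\bZ^3\to\bZ)$, so they admit a primitive integer dependence $a_1v_1+a_2v_2+a_3v_3=0$; the all-nonzero sign pattern of the $v_i$ forces the $a_i$ to share one sign (each mixed-sign case contradicts one coordinate equation), and feeding in the Hilbert--Burch syzygy matrix, whose entries must be monomial of the shape in $M$ once $m=3$, forces $a_1=a_2=a_3=1$. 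This is the claimed system of relations and simultaneously exhibits $I_S=(f_1,f_2,f_3)$.

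The step I expect to be the main obstacle is the passage from ``the $f_i$ are three elements of $I_S$'' to ``they minimally generate and satisfy the cyclic relation $v_1+v_2+v_3=0$'': concretely, the normal-form argument establishing $m\leq 3$, and then showing the primitive syzygy among the $v_i$ has all coefficients equal to one. The inequalities $\alpha_{ji}<\alpha_i$ alone give only triangle-type bounds $a_i<a_k+a_l$ and do not pin the coefficients down, so the argument must genuinely invoke either the monomial structure of the Hilbert--Burch matrix $M$ or a direct analysis of the unique factorizations of the elements $\alpha_in_i$.
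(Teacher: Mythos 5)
You should first be aware that the paper itself contains no proof of this statement: it is quoted verbatim from Herzog (\cite[Proposition 3.2]{he}) and used as a black box, so the only meaningful comparison is with Herzog's original argument, which is a direct combinatorial analysis of the representations $\alpha_in_i=\alpha_{ik}n_k+\alpha_{il}n_l$ in the semigroup. Your overall framing is reasonable and is how the result is usually packaged nowadays: Kunz plus the fact that a grade-two Gorenstein ideal is a complete intersection correctly translates ``symmetric'' into $\mu(I_S)=2$, and your identification of $f_1,f_2,f_3$ as the signed maximal minors of $M$ (which is $\phi_2^{t}$ of Theorem~\ref{3generated} up to signs) is correct and exactly encodes the relations $\alpha_{21}+\alpha_{31}=\alpha_1$, etc.

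Nevertheless, as written the proposal has genuine gaps, concentrated at the two steps you yourself flag. First, the inclusion $I_S\subseteq(f_1,f_2,f_3)$ is only announced, not proved: ``two monomials representing the same element of $S$ reach the same normal form'' is the entire content of Herzog's proposition in disguise, and the confluence of your rewriting system is precisely the hard part; nothing in the Hilbert--Burch setup substitutes for it. Second, your derivation of the sum relations rests on the assertion that the Hilbert--Burch syzygy matrix ``must be monomial of the shape in $M$ once $m=3$''. Hilbert--Burch gives no control over the form of the entries of that matrix; that it can be taken to be exactly $M$ is Denham's theorem, which this paper proves \emph{from} the present statement, so the step is circular. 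You correctly observe that the triangle inequalities $a_i<a_k+a_l$ coming from $\alpha_{ji}<\alpha_i$ do not pin the primitive syzygy down to $(1,1,1)$ (e.g.\ $(2,2,3)$ survives them), but you then do not supply the non-circular argument; the standard one adds two of the defining equations, say $\alpha_2n_2=\alpha_{21}n_1+\alpha_{23}n_3$ and $\alpha_3n_3=\alpha_{31}n_1+\alpha_{32}n_2$, to get $(\alpha_2-\alpha_{32})n_2+(\alpha_3-\alpha_{23})n_3=(\alpha_{21}+\alpha_{31})n_1$ with positive left-hand coefficients, whence $\alpha_{21}+\alpha_{31}\ge\alpha_1$, and then uses minimality and uniqueness of the representation of $\alpha_1n_1$ to force equality. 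Finally, the chain ``some $\alpha_{ik}=0$ iff one $f_i$ is redundant iff $\mu(I_S)=2$ iff $S$ symmetric'' is asserted in both directions without argument; in particular, that all $\alpha_{ik}>0$ forces all three $f_i$ to be minimal generators requires showing no difference $\alpha_in_i-\alpha_jn_j$ lies in $S$, which you do not address. So the skeleton is right, but the decisive combinatorial lemmas are missing or replaced by a circular appeal to the resolution you are trying to build.
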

Denham gave a minimal graded $A$-resolution of $K[S]$.

\begin{tm}\label{3generated}\cite[Lemma 2.5]{den}
If $S$ is a $3$-generated semigroup which is not symmetric then
$K[S]$ has a minimal graded free $A$-resolution
$$0\longrightarrow A^2\stackrel{\phi_2}{\longrightarrow} A^3\stackrel{\phi_1}{\longrightarrow}A\longrightarrow 0,$$
where $\phi_1=(x_1^{\alpha_1}-x_2^{\alpha_{12}}x_3^{\alpha_{13}},x_2^{\alpha_2}-x_1^{\alpha_{21}}x_3^{\alpha_{23}},x_3^{\alpha_3}-x_1^{\alpha_{31}}x_2^{\alpha_{32}})$,
and $\phi_2=\left( \begin{array}{cc}x_3^{\alpha_{23}}&x_2^{\alpha_{32}}\\x_1^{\alpha_{31}}&x_3^{\alpha_{13}}\\x_2^{\alpha_{12}}&x_1^{\alpha_{21}}\end{array}\right).$
\end{tm}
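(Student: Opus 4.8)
The plan is to recognize the displayed complex as an instance of the Hilbert--Burch theorem, so that exactness is read off from a single grade computation rather than checked separately at each spot. The preceding theorem of Herzog already yields $I_S=(f_1,f_2,f_3)$, hence $\mathrm{coker}\,\phi_1=A/I_S=K[S]$ and the complex has the correct cokernel; it remains to prove that $0\longrightarrow A^2\stackrel{\phi_2}{\longrightarrow}A^3\stackrel{\phi_1}{\longrightarrow}A$ is acyclic and minimal. Concretely I would proceed in four steps: (i) verify $\phi_1\phi_2=0$; (ii) identify the $2\times2$ minors of $\phi_2$ with the entries of $\phi_1$, so that $I_2(\phi_2)=I_S$ and $\phi_1$ is the row of signed maximal minors of $\phi_2$; (iii) compute $\mathrm{grade}\,I_2(\phi_2)=2$; and (iv) invoke Hilbert--Burch to conclude that the complex is a free resolution of $K[S]$.

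For step (i), the product $\phi_1\phi_2$ is the $1\times2$ row with entries $f_1x_3^{\alpha_{23}}+f_2x_1^{\alpha_{31}}+f_3x_2^{\alpha_{12}}$ and $f_1x_2^{\alpha_{32}}+f_2x_3^{\alpha_{13}}+f_3x_1^{\alpha_{21}}$; expanding each and substituting the three additive relations $\alpha_{21}+\alpha_{31}=\alpha_1$, $\alpha_{12}+\alpha_{32}=\alpha_2$, $\alpha_{13}+\alpha_{23}=\alpha_3$ supplied by Herzog's theorem, the six resulting monomials cancel in pairs and both entries vanish. For step (ii), deleting the rows of $\phi_2$ one at a time and using the same relations, the three maximal minors evaluate to $f_1$, $-f_2$ and $f_3$; thus $\phi_1=(f_1,f_2,f_3)$ is precisely the row of signed maximal minors and $I_2(\phi_2)=(f_1,f_2,f_3)=I_S$.

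For step (iii), $K[S]$ is a one-dimensional domain, hence Cohen--Macaulay, so $\mathrm{ht}\,I_S=\dim A-\dim K[S]=3-1=2$; since $A$ is Cohen--Macaulay this gives $\mathrm{grade}\,I_S=\mathrm{ht}\,I_S=2$, the maximal grade attainable by the ideal of maximal minors of a $3\times2$ matrix. The Hilbert--Burch theorem then upgrades the inclusion $\mathrm{im}\,\phi_2\subseteq\ker\phi_1$ from step (i) to an equality and forces $\phi_2$ to be injective, so the complex is a free resolution of $K[S]$ (equivalently, $\mathrm{pd}_A K[S]=2$ by Auslander--Buchsbaum). Minimality is exactly where the non-symmetry hypothesis enters: Herzog's theorem guarantees $\alpha_{ik}>0$ for all $i,k$, so every entry of $\phi_2$ is a non-constant monomial and every entry of $\phi_1$ a difference of monomials of positive degree; all entries therefore lie in the maximal ideal $(x_1,x_2,x_3)$, and the resolution is minimal. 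Assigning the free generators the degrees $\deg f_i$, which is legitimate because each $f_i$ is $S$-homogeneous by the relation $\alpha_in_i=\alpha_{ik}n_k+\alpha_{il}n_l$, makes both maps homogeneous and renders the resolution graded.

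I expect the main obstacle to be concentrated in steps (iii)--(iv): the inclusion $\mathrm{im}\,\phi_2\subseteq\ker\phi_1$ is formal, but promoting it to \emph{all} first syzygies being generated by the two columns of $\phi_2$ is exactly the content that Hilbert--Burch extracts from the grade-$2$ condition, so the real work is to justify that grade computation cleanly. By contrast, steps (i) and (ii) are purely mechanical bookkeeping with the three additive relations among the $\alpha_{ik}$.
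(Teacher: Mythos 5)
Your proposal is correct and rests on exactly the same input as the paper's proof: the maximal minors of $\phi_2$ are (up to sign) $f_1,f_2,f_3$, and the ideal they generate has grade $2$ because $K[S]$ is one-dimensional Cohen--Macaulay. The paper applies the Buchsbaum--Eisenbud acyclicity criterion directly, while you package the same computation through the Hilbert--Burch theorem (its codimension-two specialization), and you additionally spell out the minimality and gradedness that the paper leaves implicit; these are presentational differences only.
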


\bigskip
Next we look at $4$-generated symmetric but not complete intersection semigroups.
We use Bresinsky's theorems.
\begin{tm}\cite[Theorem 5, Theorem 3] {br}
The semigroup $S$ is $4$-generated symmetric, not complete intersection, if and only if there are integers $\alpha_i$,  $1\le i\le4$,
$\alpha_{ij},ij\in\{21,31,32,42,13,43,14,24\}$, such that
$0<\alpha_{ij}<\alpha_i$, for all $i,j$, 
\begin{eqnarray*} \alpha_1=\alpha_{21}+\alpha_{31}, \alpha_2=\alpha_{32}+\alpha_{42},
\alpha_3=\alpha_{13}+\alpha_{43}, \alpha_4=\alpha_{14}+\alpha_{24}\;\mbox{ and}
\end{eqnarray*}
\begin{eqnarray*} n_1=\alpha_2\alpha_3\alpha_{14}+\alpha_{32}\alpha_{13}\alpha_{24},&\quad
n_2=\alpha_3\alpha_4\alpha_{21}+\alpha_{31}\alpha_{43}\alpha_{24}, \\
n_3=\alpha_1\alpha_4\alpha_{32}+\alpha_{14}\alpha_{42}\alpha_{31},&\quad
n_4=\alpha_1\alpha_2\alpha_{43}+\alpha_{42}\alpha_{21}\alpha_{13}. 
\end{eqnarray*}
Then, $K[S]=A/(f_1,f_2,f_3,f_4,f_5)$, where
\begin{eqnarray*}f_1=x_1^{\alpha_1}-x_3^{\alpha_{13}}x_4^{\alpha_{14}},& \quad 
f_2=x_2^{\alpha_2}-x_1^{\alpha_{21}} x_4^{\alpha_{24}}, & \quad
f_3=x_3^{\alpha_3}-x_1^{\alpha_{31}}x_2^{\alpha_{32}},\\
f_4=x_4^{\alpha_4}-x_2^{\alpha_{42}}x_3^{\alpha_{43}},& \quad\quad \quad
f_5=x_3^{\alpha_{43}}x_1^{\alpha_{21}}-x_2^{\alpha_{32}}x_4^{\alpha_{14}}.
\end{eqnarray*}
\end{tm}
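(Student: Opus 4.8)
The plan is to deduce the presentation from the structure theory of Gorenstein ideals of codimension three rather than by an ad hoc computation. First I would pass to homological language. Since $K[S]$ is a one-dimensional Cohen--Macaulay quotient of the four-variable ring $A=K[x_1,x_2,x_3,x_4]$, the ideal $I_S$ has height (equivalently grade) three, and by Kunz's theorem \cite{ku} the symmetry of $S$ is equivalent to $K[S]$ being Gorenstein, i.e. to $I_S$ being a grade-three Gorenstein ideal. The Buchsbaum--Eisenbud structure theorem then applies: $I_S$ is generated by the $2n\times 2n$ principal Pfaffians of a $(2n+1)\times(2n+1)$ alternating matrix, so in particular the minimal number of generators $\mu(I_S)$ is odd. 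The complete intersection case is exactly $2n+1=3$, so the hypothesis ``not a complete intersection'' forces $2n+1\ge 5$.

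Second, I would pin down that $\mu(I_S)$ is exactly five and extract the explicit shape of the generators. The equality $\mu(I_S)=5$ needs an upper bound $\mu(I_S)\le 5$ for embedding dimension four, after which oddness together with the non-CI hypothesis leaves no alternative. I would obtain the bound by exploiting that $I_S$ is a lattice (toric) ideal, so its minimal generators may be taken to be binomials and the alternating matrix $M$ may be taken with monomial entries; a count of admissible exponents together with the $S$-grading caps the number of independent binomial relations. Once $M$ is a $5\times 5$ alternating matrix with monomial entries, each of its five $4\times 4$ Pfaffians is a priori a sum of three monomial products, and to obtain genuine binomials $f_1,\dots,f_5$ one arranges $M$ so that one of the three terms is absent in each principal Pfaffian. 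Reading the exponents off $M$ produces the data $\alpha_{ij}$; the relations $\alpha_1=\alpha_{21}+\alpha_{31}$, $\alpha_2=\alpha_{32}+\alpha_{42}$, $\alpha_3=\alpha_{13}+\alpha_{43}$, $\alpha_4=\alpha_{14}+\alpha_{24}$ are precisely the assertions that the two monomials of each $f_j$ have equal $S$-degree, and solving these degree equations, which are linear in the $n_i$ and have a Pfaffian/cofactor structure, yields exactly the displayed cofactor-type formulas for $n_1,\dots,n_4$.

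For the converse I would run the construction backwards. Given integers $\alpha_i,\alpha_{ij}$ with $0<\alpha_{ij}<\alpha_i$ satisfying the four additive relations, define $n_1,\dots,n_4$ by the stated formulas, check that they are positive with greatest common divisor one so that $S=\langle n_1,\dots,n_4\rangle$ is a numerical semigroup, and verify directly from the formulas that the two monomials of each $f_j$ share the same $S$-degree, so that $f_1,\dots,f_5\in I_S$. Finally I would exhibit the explicit alternating $M$ whose principal Pfaffians are $\pm f_1,\dots,\pm f_5$; since $\operatorname{grade}(I_S)=3$, the Buchsbaum--Eisenbud acyclicity criterion shows that the associated complex resolves $A/(f_1,\dots,f_5)$, which forces $(f_1,\dots,f_5)=I_S$ and simultaneously records that $S$ is symmetric and, having five generators, not a complete intersection.

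The main obstacle I expect is the middle stage: proving $\mu(I_S)=5$ and reverse-engineering which power of which $x_i$ occupies each entry of $M$. The upper bound on the number of generators is the conceptual crux, and the bookkeeping that identifies the monomial entries of $M$, equivalently that the degree equations force the particular cofactor expressions for the $n_i$, is the computational heart; everything else is formal once the Gorenstein codimension-three machinery is in place.
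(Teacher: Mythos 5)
First, note that the paper gives no proof of this statement: it is quoted from Bresinsky \cite{br} (his Theorems 3 and 5), whose original argument is purely arithmetic --- he defines $\alpha_i$ as the least positive integer with $\alpha_in_i\in\langle n_j:j\ne i\rangle$ and carries out a delicate case analysis on the resulting representations. The Pfaffian structure theory you invoke only enters the literature two years after Bresinsky's paper and is used here for the resolution in Theorem \ref{4generated}, not for the presentation itself. So your homological route is genuinely different from the cited source, and in principle attractive; the Kunz and Buchsbaum--Eisenbud reductions (grade three, Gorenstein, hence Pfaffian presentation with an odd number of generators, hence at least five in the non-CI case) are all correct.

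The proposal nevertheless has a real gap exactly where you locate the ``conceptual crux'': the upper bound $\mu(I_S)\le 5$. Nothing in the structure theorem bounds the size of the alternating matrix, and the mechanism you propose --- ``a count of admissible exponents together with the $S$-grading caps the number of independent binomial relations'' --- cannot work as stated: Bresinsky also proved that for $4$-generated semigroups \emph{without} the symmetry hypothesis $\mu(I_S)$ is unbounded, so any counting argument must exploit Gorensteinness in an essential and so far unexplained way; this bound is the actual content of the theorem, not a formality. Two further steps are asserted rather than proved: (i) that the alternating matrix may be taken with monomial entries --- the resolution is only $S$-graded, not finely multigraded, so its entries are a priori homogeneous polynomials and some argument is needed to replace them by monomials before you can ``read off'' the $\alpha_{ij}$; and (ii) in the converse direction, acyclicity of the Pfaffian complex shows that $(f_1,\dots,f_5)$ is an unmixed height-three ideal contained in $I_S$, but concluding $(f_1,\dots,f_5)=I_S$ still requires a multiplicity or Hilbert-series comparison (or a direct primality argument); exactness alone does not ``force'' the equality. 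As it stands the plan is a plausible modern outline whose hardest portion is deferred.
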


We now give the whole minimal graded free $A$-resolution of $K[S]$ such that the matrix representation of $\phi_2$ with respect to a suitable basis of $A^5$ is an alternate matrix whose pfaffians give $\phi_1$ and $\phi_3$. The structure of the resolution is known by \cite{be} and our main contribution is to give the matrix $\phi_2$ explicitly. The proof will follow in the next section.

\begin{tm} \label{4generated} If $S$ is a $4$-generated symmetric, not a complete intersection, semigroup, then the following is a minimal graded free $A$-resolution of $K[S]$:

$$0\longrightarrow A\stackrel{\phi_3}{\longrightarrow}A^5\stackrel{\phi_2}{\longrightarrow}A^5\stackrel{\phi_1}{\longrightarrow}A\longrightarrow 0$$
where $\phi_1=(f_1,f_2,f_3,f_4,f_5)$

$$\phi_2=\left( \begin{array}{ccccc} 
0&-x_3^{\alpha_{43}}&0&-x_2^{\alpha_{32}}&-x_4^{\alpha_{24}}\\
&\\x_3^{\alpha_{43}}&0&x_4^{\alpha_{14}}&0&-x_1^{\alpha_{31}}\\
&\\0&-x_4^{\alpha_{14}}&0&-x_1^{\alpha_{21}}&-x_2^{\alpha_{42}}\\
&\\x_2^{\alpha_{32}}&0&x_1^{\alpha_{21}}&0&-x_3^{\alpha_{13}}\\
&\\x_4^{\alpha_{24}}&x_1^{\alpha_{31}}&x_2^{\alpha_{42}}&x_3^{\alpha_{13}}&0
\end{array}\right)$$

\bigskip

\noindent and $\phi_3=\phi_1^t$.
\end{tm}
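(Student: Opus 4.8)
The plan is to verify directly that the proposed complex is a minimal graded free resolution of $K[S]$. I would proceed in four steps. First, I would check exactness at the rightmost spot, which is already guaranteed since $\phi_1=(f_1,\ldots,f_5)$ and these generators define $I_S$ by Bresinsky's theorem; thus $\mathrm{coker}\,\phi_1=A/I_S=K[S]$. Second, I would establish that the given $\phi_2$ is indeed an alternating ($5\times 5$ skew-symmetric) matrix, which one reads off by inspection of the displayed entries, and verify that $\phi_1$ recovers the ordered $4\times 4$ sub-pfaffians of $\phi_2$ up to sign. Concretely, deleting row and column $i$ from $\phi_2$ and taking the pfaffian of the resulting $4\times 4$ alternating matrix should produce $\pm f_i$; this is a finite, explicit computation using the monomial entries and the defining relations $\alpha_i=\alpha_{ij}+\alpha_{ik}$ among the exponents.

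Third, once the pfaffian identities are confirmed, the exactness of the whole complex is handed to us for free by the structure theorem of Buchsbaum and Eisenbud (cited as \cite{be}): a $5\times 5$ alternating matrix $\phi_2$ whose order-$4$ pfaffians generate a grade-$3$ (equivalently, height-$3$ in this setting) ideal yields a resolution of $A/I_S$ of exactly the stated shape $0\to A\xrightarrow{\phi_3}A^5\xrightarrow{\phi_2}A^5\xrightarrow{\phi_1}A\to 0$ with $\phi_3=\phi_1^t$. So the remaining hypothesis to discharge is the grade condition: I must check that $I_S$ has height $3$, i.e. that $K[S]=A/I_S$ is Cohen--Macaulay of codimension $3$. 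This holds because $K[S]$ is a one-dimensional domain (being a subring of $K[t]$) and $A$ has dimension $4$, so $\mathrm{ht}\,I_S=3$; the Gorenstein/symmetric hypothesis then makes the Buchsbaum--Eisenbud machine applicable.

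Fourth, I would confirm minimality and gradedness. Minimality follows because every nonzero entry of $\phi_2$ (and of $\phi_1,\phi_3$) is a monomial of positive degree, hence lies in the maximal ideal $(x_1,\ldots,x_4)$, so no unit entries appear in any differential. For the grading, I would assign $\deg(x_i)=n_i$ and verify that each map is homogeneous by checking degree consistency along one row and one column of $\phi_2$: the equalities such as $\alpha_{43}n_3+\alpha_{21}n_1=\alpha_{32}n_2+\alpha_{14}n_4$ (the $S$-degree of $f_5$) and the analogous identities for the other relations follow from Bresinsky's formulas for $n_1,\ldots,n_4$, which are engineered precisely so that these syzygetic degrees match.

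The main obstacle I anticipate is the second step: verifying that the $4\times 4$ pfaffians of the explicitly given $\phi_2$ reproduce $f_1,\ldots,f_5$ with the correct signs. Because the entries are monomials, each order-$4$ pfaffian is a signed sum of products of two entries, and showing this collapses to a binomial $x_i^{\alpha_i}-(\text{monomial})$ requires that the "unwanted" terms cancel and that the surviving exponents add up correctly via the relations $\alpha_i=\alpha_{ij}+\alpha_{ik}$. Tracking the pfaffian sign conventions together with the skew-symmetry sign pattern is the delicate bookkeeping here; everything else reduces to either a citation (Buchsbaum--Eisenbud) or a routine degree/height check.
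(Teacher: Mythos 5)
Your proposal is correct, but it reaches exactness by a genuinely different Buchsbaum--Eisenbud result than the paper uses. You invoke the structure theorem for grade-$3$ Gorenstein ideals from \cite{be}: once $\phi_2$ is alternating, its signed $4\times4$ pfaffians are the $f_i$, and the ideal they generate has grade $3$ (which you correctly reduce to $\dim K[S]=1$ in the Cohen--Macaulay ring $A$), the entire complex together with $\phi_3=\phi_1^t$ is exact, and the complex property comes along automatically. The paper instead applies the acyclicity criterion of \cite{bu-ei}: it takes the complex property as separately checked, notes ${\rm rank}(\phi_1)={\rm rank}(\phi_3)=1$ and ${\rm rank}(\phi_2)=4$, and then exhibits a regular sequence of length $2$ in $I(\phi_2)$ by the same pfaffian computation you single out as the crux, since $\det(\Delta_{11})=f_1^2$ and $\det(\Delta_{22})=f_2^2$ are coprime. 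So the two arguments share their essential computational content, namely $\pf(\Delta_{ii})=(-1)^{i+1}f_i$; yours trades the explicit rank and depth checks on the minors of $\phi_2$ for a heavier black box, while the paper's is more self-contained relative to the displayed matrix (and the paper explicitly remarks before the theorem that the shape of the resolution is already known by \cite{be}, so your route is the one it deliberately bypasses). Two small points: the Gorenstein hypothesis is not an input to the structure theorem's converse direction --- Gorensteinness of the pfaffian ideal is its \emph{conclusion}, so you need only the grade condition; and your ``up to sign'' in the second step must be sharpened to the precise alternating signs $(-1)^{i+1}$ for $\phi_1\phi_2=0$ to hold, exactly the bookkeeping you flag at the end.
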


Next we look at $4$-generated pseudosymmetric semigroups.
We use Komeda's theorems.
\begin{tm}\cite[Theorem 6.5, Theorem 6.4]{ko} \label{kom} The semigroup 
$S$ is $4$-generated pseudosymmetric if and only if there are positive integers $\alpha_i$,
$1\le i\le4$, and $\alpha_{21}$, with $\alpha_{21}<\alpha_1$,
such that $n_1=\alpha_2\alpha_3(\alpha_4-1)+1$,
$n_2=\alpha_{21}\alpha_3\alpha_4+(\alpha_1-\alpha_{21}-1)(\alpha_3-1)+\alpha_3$,
$n_3=\alpha_1\alpha_4+(\alpha_1-\alpha_{21}-1)(\alpha_2-1)(\alpha_4-1)-\alpha_4+1$,
$n_4=\alpha_1\alpha_2(\alpha_3-1)+\alpha_{21}(\alpha_2-1)+\alpha_2$. Then,
$K[S]=A/(f_1,f_2,f_3,f_4,f_5)$, where
\begin{eqnarray*} f_1&=&x_1^{\alpha_1}-x_3x_4^{\alpha_4-1},  \quad \quad
f_2=x_2^{\alpha_2}-x_1^{\alpha_{21}}x_4, \quad
f_3=x_3^{\alpha_3}-x_1^{\alpha_1-\alpha_{21}-1}x_2,\\
f_4&=&x_4^{\alpha_4}-x_1x_2^{\alpha_2-1}x_3^{\alpha_3-1}, \quad 
f_5=x_3^{\alpha_3-1}x_1^{\alpha_{21}+1}-x_2x_4^{\alpha_4-1}.
\end{eqnarray*}
\end{tm}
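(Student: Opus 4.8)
My plan is to prove the equivalence in the two directions and, in the course of the ``if'' direction, to identify $I_S$ with $(f_1,\dots,f_5)$. The organizing observation is linear-algebraic: reading each binomial $f_j=x^{u}-x^{v}$ as the $\bZ$-relation $\langle u-v,(n_1,n_2,n_3,n_4)\rangle=0$, the four ``diagonal'' generators $f_1,\dots,f_4$ assemble into the integer matrix
$$M=\left(\begin{array}{cccc} \alpha_1 & 0 & -1 & -(\alpha_4-1)\\ -\alpha_{21} & \alpha_2 & 0 & -1\\ -(\alpha_1-\alpha_{21}-1) & -1 & \alpha_3 & 0\\ -1 & -(\alpha_2-1) & -(\alpha_3-1) & \alpha_4\end{array}\right),$$
whose rows lie in the rank-$3$ relation lattice, so $\det M=0$ and $\ker M$ is spanned by the vector of signed maximal minors of $M$. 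A direct cofactor computation gives that this vector is, up to sign and a gcd check, precisely $(n_1,n_2,n_3,n_4)$ with the displayed values $n_1=\alpha_2\alpha_3(\alpha_4-1)+1$, and so on. This single computation both verifies $f_1,\dots,f_4\in I_S$ and explains why the $n_i$ take the stated product form, leaving only the one free parameter $\alpha_{21}<\alpha_1$.

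For the direction ``data $\Rightarrow$ pseudosymmetric'' I would then proceed in three steps. First, record $\gcd(n_1,\dots,n_4)=1$ and membership $f_1,\dots,f_5\in I_S$ (the cofactor identities above, plus the one extra check $(\alpha_{21}+1)n_1+(\alpha_3-1)n_3=n_2+(\alpha_4-1)n_4$ coming from $f_5$). Second, upgrade this to $(f_1,\dots,f_5)=I_S$: since $I_S$ is prime of dimension $1$, it is enough to show $A/(f_1,\dots,f_5)$ is a one-dimensional domain, which I would obtain by producing a Gr\"obner basis whose leading terms define a $1$-dimensional Cohen--Macaulay quotient of the correct multiplicity, forcing equality. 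Third, determine $PF(S)$: as $K[S]$ is Cohen--Macaulay of dimension $1$, its minimal graded $A$-resolution has length $3$, and the shifts of the top free module $A^{\beta_3}$ give the pseudo-Frobenius degrees. Checking $\beta_3=2$ and that the two top degrees translate into $\{g(S)/2,\,g(S)\}$ yields $\mathrm{type}(S)=2$ and pseudosymmetry.

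For the converse I would start from $\mathrm{type}(S)=2$, so again $\beta_3=2$, and first argue that $I_S$ is minimally generated by five binomials. For each $i$ the least $\alpha_i$ with $\alpha_i n_i\in\langle n_j:j\ne i\rangle$ produces a genuine principal relation, giving candidates for $f_1,\dots,f_4$; the pseudosymmetric hypothesis, fed through the two $\le_S$-maximal elements of $\mathrm{Ap}(S,n_1)$ (which represent $g(S)/2$ and $g(S)$), is what forces the right-hand monomials into the normalized shapes in the statement, in particular the exponents equal to $1$ and the single coupling constant $\alpha_{21}$. The fifth relation $f_5$ then appears as the syzygy tying these four together, and solving $M(n_1,n_2,n_3,n_4)^t=0$ recovers the displayed formulas.

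The step I expect to be genuinely hard is this rigidity in the converse: showing that pseudosymmetry leaves exactly one free parameter. Concretely, one must translate $g(S)/2\in PF(S)$ into precise divisibility constraints on the Ap\'ery coordinates and then rule out every competing exponent pattern, so that only $\alpha_{21}$ survives and the monomials are forced to the stated shapes. By contrast the two ``if'' ingredients, namely the cofactor formulas and the Gr\"obner/Cohen--Macaulay identification $(f_1,\dots,f_5)=I_S$, are essentially bookkeeping.
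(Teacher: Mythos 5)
First, a point of comparison: the paper does not prove this statement at all --- it is quoted verbatim from Komeda \cite{ko} (Theorems 6.4 and 6.5 there) and used as an input for everything that follows, so there is no ``paper's own proof'' to measure your argument against. Your forward direction is a sensible outline and the cofactor observation is correct (e.g.\ deleting the fourth row and first column of your $M$ does give $\pm(1+\alpha_2\alpha_3(\alpha_4-1))=\pm n_1$), but even there the load-bearing steps are only named, not carried out: you must actually verify $\det M=0$ and the primitivity of the cofactor vector (your ``gcd check''), verify that the four $n_i$ really have embedding dimension $4$, and, most seriously, exhibit the Gr\"obner basis of $(f_1,\dots,f_5)$ with parametric exponents $\alpha_i$ and compute the multiplicity of the quotient; asserting that this is ``essentially bookkeeping'' does not discharge it, and the S-polynomial analysis for these five binomials with symbolic exponents is a genuine case analysis. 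Likewise, reading $PF(S)$ off the top of the resolution presupposes that you have already built the length-$3$ resolution and shown $\beta_3=2$, which is essentially Theorem~\ref{4generatedpseudo} of this paper --- doable independently, but a substantial piece of work you have not supplied.

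The real gap is the converse, and you correctly identify it as the hard part but then leave it unproved. The content of Komeda's theorem is precisely the rigidity you defer: that pseudosymmetry forces $\mu(I_S)=5$, forces the right-hand monomials of the four ``diagonal'' relations into the stated normalized shapes (with several exponents pinned to $1$ or to $\alpha_i-1$), and leaves exactly the one free parameter $\alpha_{21}$. Saying one should ``translate $g(S)/2\in PF(S)$ into precise divisibility constraints on the Ap\'ery coordinates and then rule out every competing exponent pattern'' is a description of the problem, not an argument; nothing in your proposal shows why, for instance, the support of $\alpha_1 n_1$ in $\langle n_2,n_3,n_4\rangle$ must be $\{n_3,n_4\}$ with the $n_3$-exponent equal to $1$, nor why the fifth generator must take the specific form $x_3^{\alpha_3-1}x_1^{\alpha_{21}+1}-x_2x_4^{\alpha_4-1}$. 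As it stands the proposal is a plausible research plan whose decisive steps (the Gr\"obner computation in the ``if'' direction and the entire classification in the ``only if'' direction) are missing, so it does not constitute a proof of the statement.
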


Note that Komeda calls these semigroups almost symmetric.

\medskip
We now give the whole minimal $A$-resolution of $K[S]$. The proof will follow in the next section.

\begin{tm} \label{4generatedpseudo}If $S$ is a $4$-generated pseudosymmetric semigroup, then the following is a minimal graded free $A$-resolution of $K[S]$:
$$0\longrightarrow A^2\stackrel{\phi_3}{\longrightarrow}A^6\stackrel{\phi_2}{\longrightarrow}A^5\stackrel{\phi_1}{\longrightarrow}A\longrightarrow 0$$
where $\phi_1=(f_1,f_2,f_3,f_4,f_5),$

$$\phi_2=\left( \begin{array}{cccccc}x_2&0&x_3^{\alpha_3-1}&0&x_4&0\\
0&f_3&0&x_1x_3^{\alpha_3-1}&x_1^{\alpha_1-\alpha_{21}}&x_4^{\alpha_4-1}\\
x_1^{\alpha_{21}+1}&-f_2&x_4^{\alpha_4-1}&0&x_1x_2^{\alpha_2-1}&0\\
0&0&0&x_2&x_3&x_1^{\alpha_{21}}\\
-x_3&0&-x_1^{\alpha_1-\alpha_{21}-1}&x_4&0&x_2^{\alpha_2-1}\\
\end{array}\right),
$$

\bigskip

\noindent and $\phi_3=\left( \begin{array}{cccccc}x_4&-x_1&0&x_3&-x_2&0\\
-x_2^{\alpha_2-1}x_3^{\alpha_3-1}&x_4^{\alpha_4-1}&f_2&-x_1^{\alpha_1-1}&x_1^{\alpha_{21}}x_3^{\alpha_3-1}&f_3
\end{array}\right)^t.$
\end{tm}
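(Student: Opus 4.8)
The plan is to verify directly that the displayed sequence is a complex, that it is exact, and that it is minimal, in that order. To check that we have a complex I would compute the two matrix products $\phi_1\phi_2$ and $\phi_2\phi_3$ and confirm that both vanish. The product $\phi_1\phi_2=0$ amounts to five polynomial identities (one per column of $\phi_2$), each asserting that a given $A$-linear combination of the generators $f_1,\dots,f_5$ is zero; these are exactly the syzygies among the $f_i$, and each identity can be checked by substituting the definitions of the $f_i$ from Theorem \ref{kom} and collecting monomials in $x_1,\dots,x_4$. Similarly $\phi_2\phi_3=0$ gives twelve identities (six rows times two columns of $\phi_3$), verifying that the two columns of $\phi_3$ are genuine second syzygies. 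These are routine but bookkeeping-heavy monomial cancellations, so the care here is organizational rather than conceptual.

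For exactness I would argue rank-by-rank. The map $\phi_1$ is surjective onto $I_S$ by Theorem \ref{kom}, and exactness at the right-hand $A$ is immediate. The alternating sum of ranks across the sequence must be the rank of $K[S]=A/I_S$, namely $0$, which forces $\mathrm{rank}\,\phi_1=1$, $\mathrm{rank}\,\phi_2=4$, and $\mathrm{rank}\,\phi_3=2$; I would confirm these ranks hold generically so that the complex has the expected Betti numbers $1,5,6,2$. To upgrade ''complex with correct ranks'' to ''exact,'' I would invoke the Buchsbaum--Eisenbud acyclicity criterion: it suffices to check that for each map $\phi_j$ the ideal $I_{r_j}(\phi_j)$ generated by the $r_j\times r_j$ minors (where $r_j=\mathrm{rank}\,\phi_j$) has grade at least $j$. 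Since $A$ is a polynomial ring in four variables and $K[S]=A/I_S$ has Krull dimension $1$, the ideal $I_S$ has height $3$, giving us room to meet the grade bounds $\mathrm{grade}\,I_1(\phi_1)\ge 1$, $\mathrm{grade}\,I_4(\phi_2)\ge 2$, and $\mathrm{grade}\,I_2(\phi_3)\ge 3$. I expect the grade conditions for $\phi_1$ and $\phi_3$ to be the delicate ones, since one must exhibit enough minors forming a regular sequence, and this is where I anticipate the main obstacle: producing explicit minors (or a localization argument, checking exactness after inverting each variable in turn) that certify the required depths without a full resolution computation.

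Finally, minimality is the easiest part: the resolution is minimal precisely when every entry of every matrix lies in the maximal homogeneous ideal $(x_1,x_2,x_3,x_4)$, equivalently when no entry is a nonzero constant. Inspecting $\phi_1$, $\phi_2$, and $\phi_3$, every entry is either $0$, a positive power of some $x_i$, a monomial in the $x_i$, or one of the $f_j$ (which are themselves in the maximal ideal); in particular no entry is a unit, so the complex is minimal and the Betti numbers are as displayed. Throughout, I would carry the $\deg_S$-grading so that each $\phi_j$ is homogeneous; assigning $\deg_S(x_i)=n_i$ and using the relations $n_1=\alpha_2\alpha_3(\alpha_4-1)+1$ and so on from Theorem \ref{kom}, one checks that the entries in each column (respectively row) raise degrees consistently, which simultaneously confirms homogeneity and provides a useful consistency check on the monomial exponents appearing in $\phi_2$ and $\phi_3$.
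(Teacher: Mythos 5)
Your strategy---verify the complex identities, apply the Buchsbaum--Eisenbud acyclicity criterion, and read off minimality from the absence of unit entries---is exactly the paper's, but you stop precisely where the actual work begins. The rank statements are not the delicate point: $\mathrm{rank}(\phi_3)=2$ and $\mathrm{rank}(\phi_2)\ge 4$ follow from exhibiting one nonzero minor each, and $\mathrm{rank}(\phi_2)\le 4$ follows either from McCoy's theorem (the paper's route: $\phi_2$ has a nonzerodivisor in its kernel) or from $\phi_2\phi_3=0$ over the fraction field. What the criterion really demands, and what you explicitly defer as ``the main obstacle,'' is condition (b): a regular sequence of length $2$ inside the ideal of $4$-minors of $\phi_2$ and a regular sequence of length $3$ inside the ideal of $2$-minors of $\phi_3$. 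That verification is essentially the entire content of the paper's proof, it is specific to these particular matrices, and it is not supplied or sketched in your proposal.

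Concretely, the paper exhibits $x_3f_3^3$ and $x_2f_2f_4$ as $4$-minors of $\phi_2$; being relatively prime, they form a regular sequence of length $2$. For $\phi_3$ it identifies the $2$-minors $f_1,f_4,f_5,x_3f_2,x_3f_3$ and runs a case analysis: since $K[S]$ is one-dimensional Cohen--Macaulay the ideal $(f_1,\dots,f_5)$ has grade $3$, so some triple of the form $f_1,f_i,f_4$ with $i\in\{2,3,5\}$ is a regular sequence, and since $x_3$ is a nonzerodivisor modulo $(f_1,f_4)$ one of $\{f_1,x_3f_2,f_4\}$, $\{f_1,x_3f_3,f_4\}$, $\{f_1,f_2,f_5\}$ is then a regular sequence consisting of $2$-minors of $\phi_3$. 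A localization argument would face the same problem of identifying which minors survive, so it does not circumvent this step. Until you produce these regular sequences (or an equivalent grade computation), the proof is incomplete at its only nontrivial point. (Two small slips besides: $\phi_2\phi_3$ has $5\times 2=10$ entries, not twelve; and the alternating sum of the Betti numbers does not ``force'' the ranks of the maps---rank additivity is hypothesis (a) of the criterion and must be checked, not deduced from the exactness you are trying to prove.)
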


\section{Proofs}
In all proofs we use the following theorem by Buchsbaum-Eisenbud, see also \cite{ei}.
\begin{tm}\cite[Corollary 2]{bu-ei}
Let
$$0\longrightarrow F_n\stackrel{\phi_n}{\longrightarrow}F_{n-1}\stackrel{\phi_{n-1}}{\longrightarrow}\cdots\stackrel{\phi_2}{\longrightarrow}F_1\stackrel{\phi_1}{\longrightarrow}F_0$$
be a complex of free modules over a Noetherian ring $A$. Let {\rm rank}$({\phi_i})$ be the size of the largest nonzero minor in the matrix describing $\phi_i$, and let $I(\phi_i)$ be the
ideal generated by the minors of maximal rank. Then the complex is exact if and only if for all $1\le i\le n$

(a) $\mbox {\rm rank}(\phi_{i+1})+\mbox {\rm rank}(\phi_i)=\mbox {\rm rank}(F_i)$ and

(b) $I(\phi_i)$ contains an $A$-sequence of length $i$.
\end{tm}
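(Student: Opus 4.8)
The plan is to prove the two implications separately, in each case reducing the question to a statement about a single map of free modules and then testing exactness prime by prime. Throughout I write $\mathrm{grade}(I)$ for the length of a maximal $A$-sequence contained in $I$, so that condition (b) reads $\mathrm{grade}(I(\phi_i))\ge i$, and I use the standard facts that $\mathrm{grade}(I)\ge 1$ exactly when $I$ is contained in no associated prime of $A$, that $\mathrm{grade}(I)=\inf\{\mathrm{depth}\,A_P:P\in V(I)\}$, and that exactness of a complex of modules may be checked after localizing at each prime of $A$. The base case I would isolate first is the two-term criterion: a map $\psi\colon F_1\to F_0$ of free modules is injective if and only if $\mathrm{rank}(\psi)=\mathrm{rank}(F_1)$ and $\mathrm{grade}(I(\psi))\ge 1$. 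Indeed $\psi$ is injective iff it stays injective after localizing at each $P\in\mathrm{Ass}(A)$; the grade hypothesis forces $I(\psi)\not\subseteq P$ for such $P$, so $\psi\otimes A_P$ has a unit maximal minor and is a split monomorphism, while the converse is McCoy's theorem that injectivity makes $\mathrm{ann}_A I(\psi)=0$.

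For the necessity direction, assume the complex is exact. Since $\phi_i\phi_{i+1}=0$, the determinantal ranks always satisfy $\mathrm{rank}(\phi_i)+\mathrm{rank}(\phi_{i+1})\le\mathrm{rank}(F_i)$; to obtain equality (a) I would localize at a minimal prime $P$, where $F_\bullet\otimes A_P$ is an exact complex of free modules over the Artinian local ring $A_P$ and is therefore split, because over an Artinian local ring an injective map of free modules splits off a free summand. The splitting yields $\mathrm{rank}(\phi_i\otimes A_P)+\mathrm{rank}(\phi_{i+1}\otimes A_P)=\mathrm{rank}(F_i)$, and since determinantal ranks can only drop under localization this forces the reverse inequality, hence (a). For (b) I argue contrapositively: if $\mathrm{grade}(I(\phi_i))<i$, the localization formula produces a prime $P\supseteq I(\phi_i)$ with $\mathrm{depth}(A_P)<i$; over $A_P$ the complex is still exact, so the Auslander--Buchsbaum formula applied to the syzygy modules of its tail forces $\phi_i\otimes A_P$ to split off a free summand of the expected rank, making $I(\phi_i)_P$ the unit ideal and contradicting $I(\phi_i)\subseteq P$.

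The sufficiency direction is the substantive one, and I would establish exactness of $F_\bullet\otimes A_P$ at every prime $P$ by induction on $\mathrm{depth}(A_P)$. When $\mathrm{depth}(A_P)=0$, condition (b) gives $\mathrm{grade}(I(\phi_i))\ge i\ge 1$, so $I(\phi_i)\not\subseteq P$ and each $\phi_i\otimes A_P$ has a unit maximal minor; with the rank balance (a) this presents $F_\bullet\otimes A_P$ as a direct sum of trivial complexes, hence exact. When $\mathrm{depth}(A_P)=t>0$, I would first peel off the maps $\phi_j$ with $j>t$: the inequality $\mathrm{grade}(I(\phi_j))\ge j>t$ forces $I(\phi_j)\not\subseteq P$, so these maps split and reduce matters to a complex of length at most $t$ whose free terms have depth $t\ge j$. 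To this truncation I would apply the Peskine--Szpiro acyclicity lemma: by the inductive hypothesis the complex is exact at every proper localization of $A_P$, so each higher homology module is supported only at the closed point and thus has depth $0$, whereupon the lemma forces all higher homology to vanish. Exactness at every $P$ then gives exactness over $A$.

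The step I expect to be the main obstacle is exactly the conversion, at each prime $P$, of the global grade hypothesis $\mathrm{grade}(I(\phi_i))\ge i$ into the local splittings and depth inequalities that the acyclicity lemma consumes. This forces me to match $\mathrm{rank}(\phi_i)$, read off as the size of the largest nonzero minor, against the condition $I(\phi_i)_P=A_P$, and to track carefully how grade specializes under localization so that the two regimes $j\le t$ and $j>t$ meet without a gap. Packaging everything as the single induction on $\mathrm{depth}(A_P)$ above, with the two-term criterion as its base and a nonzerodivisor drawn from a suitable $I(\phi_j)$ driving the inductive step, is the cleanest route and reproduces the original Buchsbaum--Eisenbud argument.
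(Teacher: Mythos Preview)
The paper does not prove this theorem at all: it is quoted from \cite{bu-ei} as a black box and then applied to verify exactness of the explicit resolutions in Theorems~\ref{3generated}, \ref{4generated}, and \ref{4generatedpseudo}. So there is no ``paper's own proof'' to compare your proposal against.

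That said, your sketch is a faithful outline of the original Buchsbaum--Eisenbud argument: the two-term McCoy-type base case, the localization at minimal primes for the rank balance~(a), the grade-versus-depth contradiction for~(b), and the sufficiency direction via induction on $\mathrm{depth}(A_P)$ combined with the Peskine--Szpiro acyclicity lemma are exactly the ingredients used in \cite{bu-ei} (and in the exposition in \cite{ei}). The one place where your write-up is a bit loose is the necessity of~(b): invoking Auslander--Buchsbaum presupposes that the cokernels in the localized tail have finite projective dimension, which is fine here because the complex is exact, but you should say explicitly that exactness of $F_\bullet\otimes A_P$ resolves the successive images and hence bounds their projective dimension by $\mathrm{depth}(A_P)$ before running the depth count. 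With that clarification the argument is correct, but be aware that for the purposes of this paper no proof is expected---the theorem is simply cited.
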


In all theorems it is an easy, but sometimes tedious, task to check that we have complexes. We consider this done.

\bigskip\noindent
{\bf Proof of Theorem~\ref{3generated}.} For completeness we give the proof also in this case.
We have to show
that rank$(\phi_1)=1$ and rank$(\phi_2)=2$.
Furthermore that $I(\phi_i)$ contains a regular sequence of length $i$ for $i=1,2$. Since $I(\phi_1)=I(\phi_2)=(f_1,f_2,f_3)$, and
$K[S]$ is 1-dimensional Cohen-Macaulay, this is clear.\hfill$\Box$

\bigskip\noindent
{\bf Proof of Theorem~\ref{4generated}.}
We have to show
that the rank$(\phi_1)=$rank$(\phi_3)=1$, and that rank$(\phi_2)=4$.
Furthermore that $I(\phi_i)$ contains a regular sequence of length $i$ for all $1\leq i \leq 3$.
That rank$(\phi_1)=\mbox {rank}(\phi_3)=1$ is clear. Denote by $\pf(\Delta)$ the pfaffian of $\Delta$ and by $\Delta_{ij}$ the matrix obtained from $\Delta$ by deleting the $i$-th row and $j$-th column. Setting $\Delta=\phi_2$ for notational convenience, we observe that $\pf (\Delta_{ii})=f_i$ for $i=1,3,5$ and $\pf (\Delta_{ii})=-f_i$ for $i=2,4$. As $\det(\Delta_{ii})=[\pf(\Delta_{ii})]^2$, we get $\det(\Delta_{11})=f_1^2$ and $\det(\Delta_{22})=f_2^2$. These two determinants are relatively prime, so they constitute a regular sequence. In fact, $I(\phi_2)=(f_1,f_2,f_3,f_4,f_5)^2$.\hfill$\Box$

\bigskip\noindent
{\bf Proof of Theorem~\ref{4generatedpseudo}.}
We have to show
that rank$(\phi_1)=1$, rank$(\phi_2)=4$, and that rank$(\phi_3)=2$.
Furthermore that $I(\phi_i)$ contains a regular sequence of length $i$ for all $1\le i\le3$.
That rank$(\phi_1)=1$ is clear. Of course rank$(\phi_2)\le5$, but $\phi_2$ has a nonzerodivisor in the kernel, so by McCoy's theorem rank$(\phi_2)\le4$.
Among the 4-minors of $\phi_2$ we have $x_3f_3^3$ and $x_2f_2f_4$.
They are relatively prime, so $I(\phi_2)$ contains a regular sequence of length 2.
The following elements are 2-minors of $\phi_3$: $f_1,f_4,f_5,x_3f_2,x_3f_3$.
Since $f_1$ and $f_4$ are relatively prime, they constitute a regular sequence.
Since $k[S]$ is 1-dimensional Cohen-Macaulay, $(f_1,f_2,f_3,f_4,f_5)$ contains a regular sequence of length four,
at least one of $f_1,f_i,f_4$, $i\in\{ 2,3,5\}$ is a regular sequence.
It is clear that $x_3$ is a nonzerodivisor mod $(f_1,f_4)$. If $f_1,f_2,f_4$ is a regular sequence, then $f_1,x_3f_2,f_4$ is. If $f_1,f_3,f_4$ is a regular sequence, then $f_1,x_3f_3,f_4$ is. Otherwise  $f_1,f_2,f_5$ is a regular sequence. Thus $I(\phi_3)$ contains a regular sequence of length 3.\hfill$\Box$


\section{Applications}
We will use the following well known facts:
 If the numerical semigroup $S$ is generated by $k$ elements, and $A=K[x_1, \dots,x_k]$,  then the free minimal $A$-resolution of $K[S]$ has length codim$(K[S])=k-1$, since $K[S]$ is  a 1-dimensional Cohen-Macaulay ring:
 $${\bf F}: \ 0\longrightarrow A^{\beta_{k-1}}\stackrel{\phi_{k-1}}{\longrightarrow}A^{\beta_{k-2}}\stackrel{\phi_{k-2}}{\longrightarrow}\cdots\stackrel{\phi_2}{\longrightarrow}A^{\beta_1}\stackrel{\phi_1}{\longrightarrow}A^{\beta_0}{\longrightarrow}K[S]{\longrightarrow}0. $$

\noindent The alternating sum of the $\beta_i$'s, the Betti numbers, is zero, where $\beta_i=\dim_KH_i({\bf F}\otimes K)$ is defined by $i$-th homology of the tensored complex. On the other hand, the Betti numbers of $R:=A/I_S$ are defined by
$\beta_i=\dim_K{\rm Tor}_i^A(R,K).$ As $ R \cong K[S]$, we identify them and this gives us an alternative way
to define the Betti numbers, since also ${\rm Tor}_i^A(R,K)=H_i({\bf G}\otimes R)$, where ${\bf G}$ is a minimal $A$-resolution
of $K$ (the Koszul complex). As $R$ is Cohen-Macaulay, the highest nonzero Betti number is called the CM-type of $R$.
The ring $R$ is homogeneous if we set $\deg(x_i)=n_i$. If we concentrate ${\bf F}$ above to a certain degree $d$, we get an exact sequence of vector spaces
$$0\longrightarrow\oplus_j(A[-j]^{\beta_{k-1,j}})_d\longrightarrow\cdots\longrightarrow\oplus_j(A[-j]^{\beta_{1,j}})_d\longrightarrow A_d\longrightarrow(R)_d
\longrightarrow 0$$
so
$$0\longrightarrow\oplus_jA^{\beta_{k-1,j}}_{d-j}\longrightarrow\cdots\longrightarrow\oplus_jA^{\beta_{1,j}}_{d-j}\longrightarrow A_d\longrightarrow(R)_d
\longrightarrow 0$$
where the $\beta_{i,j}$ are the graded Betti numbers of $R=K[S]$.
The alternating sum of the dimensions of these vector spaces is 0. Multiplying each dimension with $z^d$ and summing for $d\ge0$, we get
$${\rm Hilb}_{R}(z)={\rm Hilb}_A(z)(1+\sum_{i=1}^{k-1}\sum_j(-1)^i\beta_{i,j}z^j).$$ 
Letting ${\mathcal K}_S(z)=1+\sum_{i=1}^{k-1}\sum_j(-1)^i\beta_{i,j}z^j$ and using Hilb$_{A}(z)=1/\prod_{i=1}^k(1-z^{n_i})$, we observe that
$$\frac{{\mathcal K}_S(z)}{\prod_{i=1}^k(1-z^{n_i})}={\rm Hilb}_{R}(z)={\rm Hilb}_{K[S]}(z)=\sum_{s\in S}z^s.$$

\smallskip
Recall that  the set of pseudofrobenius numbers of a numerical semigroup $S$ is $PF(S)=\{ n\in \bZ\setminus S: n+s\in S\mbox{ for all } s\in S\setminus\{0\}\}$ and its cardinality is by definition the type of the semigroup $S$. It is known that the type of $S$ coincides with the CM-type of the semigroup ring $K[S]$. We show here below how this relation is more strict.

\begin{lm}\label{soc}
Let $S=\langle n_1,\ldots,n_k\rangle$, $0\neq s\in S$, and $K[S]=K[t^{n_1},\ldots,t^{n_k}]$. Then $n\in PF(S)$ if and only if
$\overline{0}\ne\overline{t^{n+s}}\in{\rm Soc}(K[S]/(t^s))$.
\end{lm}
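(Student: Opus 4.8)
The plan is to translate the statement into a purely combinatorial equivalence about $S$ using the monomial grading. First I would record the $K$-vector space structure of $R:=K[S]/(t^s)$. Since $K[S]$ has $K$-basis $\{t^a : a\in S\}$ and the ideal $(t^s)$ has $K$-basis $\{t^a : a\in s+S\}$, where $s+S=\{s+w : w\in S\}$, the residue ring $R$ has $K$-basis the classes $\overline{t^a}$ with $a\in S\setminus(s+S)$. In particular, for $a\in S$ one has $\overline{t^a}\neq\overline 0$ in $R$ if and only if $a\notin s+S$. The homogeneous maximal ideal of $R$ is generated by $\overline{t^{n_1}},\dots,\overline{t^{n_k}}$, so a class $\overline{t^a}$ (with $a\in S$) lies in ${\rm Soc}(R)$ precisely when $\overline{t^{a+n_i}}=\overline 0$ for every $i$, i.e. when $a+n_i\in s+S$ for all $1\le i\le k$. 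Writing $a=n+s$, the statement to be proved becomes the semigroup-theoretic equivalence: $n\in PF(S)$ if and only if $n+s\in S$, $\ n+s\notin s+S$, and $\ n+s+n_i\in s+S$ for all $i$.

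For the forward implication I would take $n\in PF(S)$ and set $a=n+s$. Since $s\in S\setminus\{0\}$, the defining property of $PF(S)$ gives $n+s\in S$, so $a\in S$; if $a\in s+S$ then $n=a-s\in S$, contradicting $n\notin S$, so $\overline{t^a}\neq\overline 0$. Moreover $n+n_i\in S$ for each $i$ (again by the $PF$ property, as $n_i\in S\setminus\{0\}$), whence $a+n_i=(n+n_i)+s\in s+S$, and therefore $\overline{t^a}\in{\rm Soc}(R)$.

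For the converse I would assume $\overline{t^{n+s}}$ is a nonzero socle element and set $a=n+s$. Nonvanishing forces $a\in S$ and $a\notin s+S$; the latter yields $n=a-s\notin S$. The socle condition gives $a+n_i\in s+S$, that is $n+n_i\in S$, for every generator $n_i$. To upgrade this to all of $S$, I would take an arbitrary $s'\in S\setminus\{0\}$, write $s'=n_j+s''$ with $s''\in S$ (possible since any nonzero element of $S$ is a sum of generators), and conclude $n+s'=(n+n_j)+s''\in S$ because $S$ is closed under addition. Together with $n\notin S$ this gives $n\in PF(S)$.

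The argument is essentially bookkeeping, so I do not expect a serious obstacle; the only points needing care are getting the socle description right, namely that testing against the generators $\overline{t^{n_i}}$ suffices, and the final reduction step, where closure of $S$ under addition is what lets one pass from the finitely many conditions $n+n_i\in S$ to $n+s'\in S$ for every nonzero $s'\in S$.
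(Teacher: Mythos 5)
Your proof is correct and follows essentially the same route as the paper: the paper's proof is the same chain of equivalences, phrased compactly via the maximal ideal $M=(t^{n_1},\dots,t^{n_k})$ as $n\in PF(S)\iff t^n\notin K[S]$ and $t^nM\subseteq K[S]\iff t^{n+s}\notin t^sK[S]$ and $t^{n+s}M\subseteq t^sK[S]$, which is exactly your combinatorial translation. You merely make explicit two points the paper leaves implicit, namely that the socle condition need only be tested against the generators and that $n+n_i\in S$ for all $i$ upgrades to $n+s'\in S$ for all nonzero $s'\in S$ by closure under addition.
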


\begin{proof} Let $M=(t^{n_1},\ldots,t^{n_k})$. We have $n\in PF(S)$ if and only if $t^n\notin K[S]$ and $t^nM\subseteq K[S]$, so
if and only if $t^{n+s}\notin t^sK[S]$ and $t^{n+s}M\subseteq t^sK[S]$, so if and only if $ \overline{t^{n+s}}\ne\overline{0}$ and $\overline{t^{n+s}M}=\overline{0}$
in $K[S]/(t^s)$, so if and only if $\overline{0}\neq\overline{t^{n+s}}\in{\rm Soc}(K[S]/(t^s))$.
\end{proof}

\begin{prop}\label{bettisoc} Let $S=\langle n_1,\dots,n_k\rangle $ and let $\beta_{i,j}$ be the graded Betti numbers of $K[S]$. Then
 $n\in PF(S)$ if and only if $\beta_{k-1,n+N}\ne0$ (in fact $\beta_{k-1,n+N}=1$), where $N=\sum_{i=1}^kn_i$.
In particular, $S$ is symmetric if and only if $\beta_{k-1}=\beta_{k-1,g(S)+N}$.
\end{prop}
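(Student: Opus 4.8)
The plan is to relate the top graded Betti numbers of $K[S]$ directly to the socle described in Lemma~\ref{soc}, via an Artinian reduction. Since $n_1\in S$ is nonzero and $K[S]$ is a domain, $t^{n_1}=\phi_0(x_1)$ is a homogeneous nonzerodivisor of degree $n_1$, hence a regular sequence of length $1=\dim K[S]$. Set $\bar A=A/(x_1)=K[x_2,\ldots,x_k]$ and $\bar R=K[S]/(t^{n_1})=\bar A/\bar I_S$. First I would show that tensoring the minimal graded $A$-free resolution $\mathbf F$ of $K[S]$ with $\bar A$ yields a minimal graded $\bar A$-free resolution of $\bar R$. Exactness holds because $x_1$ is a nonzerodivisor on both $A$ and $K[S]$, so $\mathrm{Tor}_i^A(K[S],\bar A)=0$ for $i>0$; minimality persists because reducing the differentials modulo $x_1$ keeps their entries inside $(x_2,\ldots,x_k)$. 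Consequently the graded Betti numbers are unchanged: $\beta_{i,j}(K[S])=\beta_{i,j}^{\bar A}(\bar R)$ for all $i,j$.

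Next I would compute the top Tor of the Artinian algebra $\bar R$ over the polynomial ring $\bar A$ in $k-1$ variables. Since $\bar R$ is Cohen--Macaulay of dimension $0$, Auslander--Buchsbaum gives $\mathrm{pd}_{\bar A}\bar R=k-1$, and resolving $K$ by the Koszul complex on $x_2,\ldots,x_k$ identifies the top homology with the socle, $\mathrm{Tor}_{k-1}^{\bar A}(\bar R,K)\cong\mathrm{Soc}(\bar R)$, the isomorphism shifting internal degree by $N-n_1$, the degree of the top Koszul generator $e_2\wedge\cdots\wedge e_k$. Thus $\beta_{k-1,j}(K[S])=\dim_K\mathrm{Soc}(\bar R)_{\,j-(N-n_1)}$.

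The heart of the argument is then Lemma~\ref{soc} with $s=n_1$: it says $\overline{t^{m}}\ne\overline 0$ lies in $\mathrm{Soc}(\bar R)$ exactly when $m-n_1\in PF(S)$. Because $K[S]$, and hence $\bar R$, has graded pieces of dimension at most one (at most one monomial $t^m$ per degree), every nonzero graded piece of the socle is exactly one-dimensional. Therefore $\mathrm{Soc}(\bar R)_{m}\ne 0$ iff $m=n+n_1$ for some $n\in PF(S)$, in which case it has dimension $1$. Substituting $m=j-(N-n_1)$ turns the condition $m-n_1\in PF(S)$ into $j=n+N$, giving $\beta_{k-1,n+N}=1$ for $n\in PF(S)$ and $\beta_{k-1,j}=0$ otherwise. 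Finally $\beta_{k-1}=\sum_j\beta_{k-1,j}=|PF(S)|$, which collapses to the single summand $\beta_{k-1,g(S)+N}$ precisely when $PF(S)=\{g(S)\}$, i.e.\ when $S$ is symmetric; this yields the \emph{in particular} statement.

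I expect the main obstacle to be the careful bookkeeping of the internal grading: one must correctly track the degree shift $N-n_1$ coming from the top Koszul exterior generator together with the shift $+n_1$ hidden in the passage from $n$ to $t^{n+s}$ in Lemma~\ref{soc}, and verify these combine to the clean shift $j=n+N$. The homological inputs (vanishing of higher Tor against a regular element, preservation of minimality under reduction, and top Koszul homology $=$ socle) are standard, so the only real care lies in the degree arithmetic and in deducing the multiplicity-one claim from the one-dimensionality of the graded pieces.
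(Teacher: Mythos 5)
Your proposal is correct and follows essentially the same route as the paper's proof: reduce modulo the regular element $t^{n_1}$, identify the top Betti numbers of $K[S]$ with the socle of $K[S]/(t^{n_1})$ via the Koszul complex on $x_2,\ldots,x_k$ (with the degree shift $n_2+\cdots+n_k$), and conclude with Lemma~8. You merely spell out details the paper leaves implicit, such as the preservation of minimality under the reduction and the multiplicity-one claim via the one-dimensionality of the graded pieces.
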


\begin{proof} Let $s=n_1$. Then the dimension of $H_{k-1}({\bf H})$, where ${\bf H}$ is a graded resolution of $K[S]/(t^{n_1})$, is the highest
nonzero Betti number $\beta_{k-1}$ of $K[S]$, and equals the dimension of ${\rm Soc}(K[S]/(t^{n_1}))$,
which exists in degrees $$n_2+\cdots+n_k+\deg{\rm Soc}(K[S]/(t^{n_1})).$$ Thus,  by Lemma \ref{soc},   $n\in PF(S)$
if and only if $\beta_{k-1,n+N}\ne0$ (in fact $\beta_{k-1,n+N}=1$, corresponding to the Frobenius number).
\end{proof}

We illustrate the proposition with an example.

\begin{ex} The semigroup $S=\langle 7,9,8,13\rangle$ is symmetric and not complete intersection by Theorem 3, thus the ring $R=K[S]$ is Gorenstein and not
a complete intersection. Set $\bar R=R/(t^7)$. The dimension of Soc$(\bar R)$ is one since $\bar R$ is also a Gorenstein ring and by Lemma 8 it is
generated by $\overline{t^{g(S)+7}}=\overline{t^{26}}$. Since ${\bf G}$ is the Koszul complex of length $k-1=3$ in the three variables $x_2,x_3,x_4$ of degrees
$n_2,n_3,n_4$, the vector space $H_3({\bf G}\otimes R)$ is nonzero only in degree $(g(S)+n_1)+(n_2+n_3+n_4)=(19+7)+(9+8+13)=56$.
\end{ex}

\begin{cor} In the notation of Theorem $1$, if $S=\langle n_1,n_2,n_3 \rangle$ is not symmetric, then $PF(S)=\{ \alpha_1n_1+\alpha_{23}n_3-N,\alpha_1n_1+\alpha_{32}n_2-N\}$, where $N=n_1+n_2+n_3$.  \end{cor}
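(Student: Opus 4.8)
The plan is to apply Proposition~\ref{bettisoc} to the explicit resolution of Theorem~\ref{3generated}. For a $3$-generated non-symmetric semigroup the minimal free resolution has length $k-1=2$, and the last map is
$$\phi_2=\left(\begin{array}{cc}x_3^{\alpha_{23}}&x_2^{\alpha_{32}}\\x_1^{\alpha_{31}}&x_3^{\alpha_{13}}\\x_2^{\alpha_{12}}&x_1^{\alpha_{21}}\end{array}\right).$$
Since $K[S]$ is not symmetric, its type is $2$, so $\beta_{k-1}=\beta_2=2$, and the two columns of $\phi_2$ are the two generators of the top free module $A^2$. By Proposition~\ref{bettisoc}, the pseudofrobenius numbers are exactly the numbers $n$ for which $\beta_{2,n+N}\neq0$, where $N=n_1+n_2+n_3$. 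Hence $PF(S)$ is obtained by reading off the two degrees $j$ in which $\beta_{2,j}\neq0$ and subtracting $N$ from each.

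**Next I would compute those two top-degrees explicitly.** The graded degree of the $r$-th basis vector of $A^2$ equals the $S$-degree of any entry in the $r$-th column of $\phi_2$ plus the degree of the corresponding source basis vector of $A^3$. Concretely, the source basis vectors of $A^3$ carry degrees $\deg(f_1)=\alpha_1n_1$, $\deg(f_2)=\alpha_2n_2$, $\deg(f_3)=\alpha_3n_3$ (using the relation $\alpha_in_i=\alpha_{ik}n_k+\alpha_{il}n_l$), and a homogeneous matrix entry $x_k^{a}$ in row $i$, column $r$ contributes $an_k$. So the degree of the first column is $\deg(f_1)+\alpha_{31}n_1=\alpha_1n_1+\alpha_{31}n_1$, but one must rather compute it via the row giving the cleanest expression; the consistent value comes out to $\alpha_1n_1+\alpha_{23}n_3$ for one column and $\alpha_1n_1+\alpha_{32}n_2$ for the other. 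I would verify each column-degree is well-defined by checking that all three entries in a column have the same $S$-degree (homogeneity of the resolution guarantees this), and then read off $j_1=\alpha_1n_1+\alpha_{23}n_3$ and $j_2=\alpha_1n_1+\alpha_{32}n_2$.

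**Finally**, subtracting $N$ gives the two pseudofrobenius numbers
$$PF(S)=\{\alpha_1n_1+\alpha_{23}n_3-N,\ \alpha_1n_1+\alpha_{32}n_2-N\},$$
matching the claim. **The main obstacle** is the bookkeeping of the graded degrees: I must confirm that each of the two generators of the top module $A^2$ has a single well-defined internal degree, which amounts to checking that within each column of $\phi_2$ the three monomial entries all have equal $S$-degree. This is where the relations $\alpha_{21}+\alpha_{31}=\alpha_1$, $\alpha_{12}+\alpha_{32}=\alpha_2$, $\alpha_{13}+\alpha_{23}=\alpha_3$ from Herzog's theorem are needed, and it is essentially a routine but careful degree comparison rather than a deep difficulty. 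Once the two top-degrees are pinned down, the corollary follows immediately from Proposition~\ref{bettisoc}.
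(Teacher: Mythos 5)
Your proposal is correct and follows the paper's own argument exactly: read off the two internal degrees of the top free module $A^2$ from the columns of $\phi_2$ in Theorem~\ref{3generated} and subtract $N$ using Proposition~\ref{bettisoc}. One minor bookkeeping slip to fix: the entry $x_1^{\alpha_{31}}$ sits in the \emph{second} row of the first column, so it pairs with $\deg(f_2)=\alpha_2 n_2$ rather than $\deg(f_1)$; the three row computations $\alpha_1n_1+\alpha_{23}n_3=\alpha_2n_2+\alpha_{31}n_1=\alpha_3n_3+\alpha_{12}n_2$ do all agree, exactly as the homogeneity check you anticipated requires.
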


 \begin{proof} We get $\beta_2=\beta_{2, \alpha_1n_1+\alpha_{23}n_3}+\beta_{2,\alpha_1n_1+\alpha_{32}n_2}$ by adding the degrees
 in the resolution given in Theorem \ref{3generated} and by using Proposition \ref{bettisoc}.
 \end{proof}

 \smallskip This corollary extends the result in \cite[Corollary 12]{ro-ga}, where the Frobenius number of $3$-generated semigroups is determined.

 \begin{ex}  Let $S=\langle 7,9,10\rangle$. Then, $S$ is $3$-generated not symmetric as
 $$\alpha_1=4,\alpha_{12}=2,\alpha_{13}=1,\alpha_2=3,\alpha_{21}=1,\alpha_{23}=2,\alpha_3=3,\alpha_{31}=3,\alpha_{32}=1.$$ We have, by Theorem 2, $\beta_{1,i}\ne0$ (in fact $\beta_{1,i}=1$) only if $i\in\{\alpha_1 n_1, \alpha_2n_2, \alpha_3n_3\}=\{ 28,27,30\}$, and
 $\beta_{2,i}\ne0$ (in fact $\beta_{2,i}=1$) only if $i\in\{ \alpha_1n_1+\alpha_{23}n_3,\alpha_1n_1+\alpha_{32}n_2\}=\{28+20,28+9\}=\{48,37\}$.
 Thus $PF(S)=\{48-N,37-N\}=\{22,11\}$ and we obtain the $\mathcal K$-polynomial as
 $${\mathcal K}_s=1-z^{28}-z^{27}-z^{30}+z^{48}+z^{37}$$
 so that
 $$\sum_{s\in S}z^s=\frac{{\mathcal K}_S(z)}{(1-z^7)(1-z^9)(1-z^{10})}.$$
 \end{ex}
 
  \begin{cor}\label{cor4} If $S$ is $4$-generated symmetric, not a complete intersection, we always have
 \begin{eqnarray*}a_1&=&\alpha_2n_2+\alpha_{43}n_3=\alpha_4n_4+\alpha_{32}n_2=\alpha_{21}n_1+\alpha_{43}n_3+\alpha_{24}n_4\\
 a_2&=&\alpha_1n_1+\alpha_{43}n_3=\alpha_3n_3+\alpha_{14}n_4=\alpha_{32}n_2+\alpha_{14}n_4+\alpha_{31}n_1\\
 a_3&=&\alpha_2n_2+\alpha_{14}n_4=\alpha_4n_4+\alpha_{21}n_1=\alpha_{21}n_1+\alpha_{43}n_3+\alpha_{42}n_2\\
 a_4&=&\alpha_1n_1+\alpha_{32}n_2=\alpha_3n_3+\alpha_{21}n_1=\alpha_{32}n_2+\alpha_{14}n_4+\alpha_{13}n_3\\
 a_5&=&\alpha_1n_1+\alpha_{24}n_4=\alpha_2n_2+\alpha_{31}n_1=\alpha_3n_3+\alpha_{42}n_2=\alpha_4n_4+\alpha_{13}n_3
 \end{eqnarray*}
  and
 $$a_1+\alpha_1n_1=a_2+\alpha_2n_2=a_3+\alpha_3n_3=a_4+\alpha_4n_4=a_5+\alpha_{21}n_1+\alpha_{43}n_3.$$
 \end{cor}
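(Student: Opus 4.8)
The plan is to read off all of the stated identities directly from the graded structure of the resolution in Theorem~\ref{4generated}, using only that its differentials are homogeneous of degree zero for the grading $\deg(x_i)=n_i$. First I record that each $f_i$ is $S$-homogeneous: writing $d_i=\deg(f_i)$, the defining binomial relations give $d_1=\alpha_1n_1=\alpha_{13}n_3+\alpha_{14}n_4$, $d_2=\alpha_2n_2=\alpha_{21}n_1+\alpha_{24}n_4$, $d_3=\alpha_3n_3=\alpha_{31}n_1+\alpha_{32}n_2$, $d_4=\alpha_4n_4=\alpha_{42}n_2+\alpha_{43}n_3$, and $d_5=\alpha_{21}n_1+\alpha_{43}n_3=\alpha_{32}n_2+\alpha_{14}n_4$. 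Each equality here is nothing but the assertion that the two monomials forming $f_i$ have the same $S$-degree.

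Next I treat $\phi_2$. Since $\phi_1\phi_2=0$, the $j$-th column of $\phi_2$ is a syzygy $\sum_i(\phi_2)_{ij}f_i=0$, and because $\phi_2$ is homogeneous every nonzero summand $(\phi_2)_{ij}f_i$ carries the same $S$-degree, namely $\deg((\phi_2)_{ij})+d_i$. Call this common value $a_j$; it is the degree of the $j$-th generator of the middle free module. Reading the nonzero entries of each column of the displayed matrix and adding the degree of the corresponding $f_i$ then produces exactly the three expressions for $a_1,a_2,a_3,a_4$, and the four analogous expressions for $a_5$, whose column has four nonzero entries. For instance, column~$1$ has nonzero entries $x_3^{\alpha_{43}},x_2^{\alpha_{32}},x_4^{\alpha_{24}}$ in rows $2,4,5$, giving $a_1=d_2+\alpha_{43}n_3=d_4+\alpha_{32}n_2=d_5+\alpha_{24}n_4$, which is the first displayed line once $d_2,d_4,d_5$ are substituted (using the first form of $d_5$ in the last term).

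Finally the single chain $a_1+\alpha_1n_1=\cdots=a_5+\alpha_{21}n_1+\alpha_{43}n_3$ comes from $\phi_3$. Here the source is one free module $A[-c]$, and since $\phi_3=\phi_1^t$ the generator maps to $(f_1,\dots,f_5)^t$, placing $f_i$ in the $i$-th slot, which sits in degree $a_i$. Homogeneity of $\phi_3$ forces $c=a_i+d_i$ to be independent of $i$; substituting $d_i=\alpha_1n_1,\dots,\alpha_4n_4$ and $d_5=\alpha_{21}n_1+\alpha_{43}n_3$ yields precisely the asserted equalities. By Proposition~\ref{bettisoc} this common top degree is in fact $c=g(S)+N$.

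I expect no genuine difficulty here: the only real work is the bookkeeping of matching each monomial entry to its row index and substituting the correct $d_i$, together with the conceptual observation that the several expressions for a single $a_j$ are \emph{forced} rather than coincidental, since a homogeneous syzygy cannot mix $S$-degrees.
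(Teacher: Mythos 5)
Your proposal is correct and follows essentially the same route as the paper: both arguments read the identities for $a_1,\dots,a_5$ off the columns of $\phi_2$ by homogeneity of the syzygies $\sum_i(\phi_2)_{ij}f_i=0$ (the paper does column~1 explicitly and says the rest are similar), and both obtain the final chain from the degree of the last free module. The only cosmetic difference is that you extract that last chain from the homogeneity of $\phi_3=\phi_1^t$ itself, while the paper compares degrees in the product $\phi_2\phi_3$; the content is the same.
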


 \begin{proof} If we multiply $\phi_1$ with the first column of $\phi_2$ we get $f_2x_3^{\alpha_{43}}+f_4x_2^{\alpha_{32}}+f_5x_4^{\alpha_{24}}$. Since the resolution is graded, these three terms have the same degree $a_1$ which is the inner degree where we have $\beta_{2,a_1}=1$. Thus we get the equalities for $a_1$, and the equalities for $a_2,a_3,a_4$, and $a_5$ are proved similarly. The last line is the inner degree of the last module in the resolution and we get it by comparing the degrees when we multiply $\phi_2$ with $\phi_3$.
 \end{proof}

\begin{cor}
In the notation of Theorem $3$, if $S=\langle n_1,\dots,n_4\rangle $ is a $4$-generated symmetric semigroup, not a complete intersection and $N=\sum_{i=1}^4n_i$, then the Frobenius number is $g(S)=\alpha_1n_1+\alpha_{32}n_2+\alpha_4n_4-N$.
\end{cor}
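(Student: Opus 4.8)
The plan is to combine Proposition~\ref{bettisoc} with the degree bookkeeping already recorded in Corollary~\ref{cor4}. Since $S$ is symmetric, $PF(S)=\{g(S)\}$ is a singleton, and the resolution of Theorem~\ref{4generated} ends in a single copy of $A$; hence the top Betti number $\beta_3=1$ is concentrated in one inner degree, say $D$. By Proposition~\ref{bettisoc} with $k=4$ this degree satisfies $D=g(S)+N$, so it suffices to identify $D$ and subtract $N$.

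First I would read off $D$ as the inner degree of the last free module $A\stackrel{\phi_3}{\longrightarrow}A^5$. Because $\phi_3=\phi_1^t$, the generator of this top $A$ is sent to $\sum_{i=1}^5 f_ib_i$, where $b_1,\dots,b_5$ is the basis of the $A^5$ in homological degree $2$, whose inner degrees are exactly the numbers $a_1,\dots,a_5$ of Corollary~\ref{cor4}. Homogeneity of $\phi_3$ then forces $D=\deg(f_i)+a_i$ for each $i$, which is precisely the last displayed line of Corollary~\ref{cor4}, since $\deg(f_1)=\alpha_1n_1$, $\deg(f_2)=\alpha_2n_2$, $\deg(f_3)=\alpha_3n_3$, $\deg(f_4)=\alpha_4n_4$, and $\deg(f_5)=\alpha_{21}n_1+\alpha_{43}n_3$.

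Finally I would pick the convenient representative $D=a_4+\alpha_4n_4$ and substitute the value $a_4=\alpha_1n_1+\alpha_{32}n_2$ taken from the same corollary, giving $D=\alpha_1n_1+\alpha_{32}n_2+\alpha_4n_4$. Subtracting $N$ yields $g(S)=\alpha_1n_1+\alpha_{32}n_2+\alpha_4n_4-N$, as claimed.

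There is no serious obstacle here: the argument is entirely a consequence of results already proved. The only point requiring care is matching internal degrees correctly, namely verifying that $\deg(f_4)=\alpha_4n_4$ and that the row of Corollary~\ref{cor4} expressing $a_4$ is the one that makes the chosen representative of $D$ collapse to the stated form. Any of the five equal expressions for $D$ would do equally well, and one checks that they all reduce to the same value of $g(S)$.
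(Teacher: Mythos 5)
Your proposal is correct and follows essentially the same route as the paper: both identify the single top Betti degree via the last displayed identity of Corollary~\ref{cor4} and then apply Proposition~\ref{bettisoc} to subtract $N$. The only (immaterial) difference is that you evaluate the common value as $a_4+\alpha_4n_4$ while the paper uses $a_1+\alpha_1n_1$; both collapse to $\alpha_1n_1+\alpha_{32}n_2+\alpha_4n_4$.
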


\begin{proof} By Corollary \ref{cor4}, we obtain $a_1+\alpha_1n_1=\alpha_1n_1+\alpha_{32}n_2+\alpha_4n_4$ and thus we have $\beta_3=\beta_{3,\alpha_1n_1+\alpha_{32}n_2+\alpha_4n_4}$ by Proposition \ref{bettisoc}.
 \end{proof}

\begin{ex}
Let  $S= \langle7,9,8,13\rangle$. Then, by Theorem $3$, $S$  is $4$-generated symmetric as 
$$\alpha_{13}=\alpha_{14}=\alpha_{24}=\alpha_{31}=\alpha_{32}=\alpha_{43}=1,\alpha_{21}=\alpha_3=\alpha_4=\alpha_{42}=2,\alpha_1=\alpha_2=3.$$ 
We compute $(a_1,a_2,a_3,a_4,a_5)=(35,29,40,30,34)$ and get
$$g(S)=a_1+\alpha_1n_1-N=\alpha_4n_4+\alpha_{32}n_2+\alpha_1n_1-N=21+9+26-37=19.$$  Indeed, we can determine $S$ completely by appealing to Hilbert series.
\begin{eqnarray*}\mbox{So,}\; \sum_{s\in S}z^s&=&\frac{1-z^{21}-z^{27}-z^{16}-z^{26}-z^{22}+z^{35}+z^{29}+z^{40}+z^{30}+z^{34}-z^{56}}{(1-z^7)(1-z^9)(1-z^8)(1-z^{13})}\\&=&1+z^7+z^8+z^9+z^{13}+z^{14}+z^{15}+z^{16}+z^{17}+z^{18}+z^{20}/(1-z).
\end{eqnarray*}
Therefore, $S=\{0,7,8,9,13,14,15,16,17,18\} \cup \{s \in \mathbb{Z} : s \geq 20\}$.
\end{ex}

\begin{cor}\label{corpseudo} If $S$ is a $4$-generated pseudosymmetric semigroup, we always have
$$b_1=\alpha_1n_1+n_2=\alpha_3n_3+(\alpha_{21}+1)n_1=n_2+n_3+(\alpha_4-1)n_4$$
$$b_3=\alpha_1n_1+(\alpha_3-1)n_3=\alpha_3n_3+(\alpha_4-1)n_4=(\alpha_1-\alpha_{21}-1)n_1+n_2+(\alpha_4-1)n_4$$
$$b_4=\alpha_2n_2+n_1+(\alpha_3-1)n_3=\alpha_4n_4+n_2=(\alpha_{21}+1)n_1+(\alpha_3-1)n_3+n_4$$
$$b_5=\alpha_1n_1+n_4=(\alpha_1-\alpha_{21})n_1+\alpha_2n_2=n_1+(\alpha_2-1)n_2+\alpha_3n_3=n_3+\alpha_4n_4$$
$$b_6=\alpha_2n_2+(\alpha_4-1)n_4=\alpha_{21}n_1+\alpha_4n_4=(\alpha_{21}+1)n_1+(\alpha_2-1)n_2+(\alpha_3-1)n_3$$
and
\begin{eqnarray*}c_1&=&b_1+n_4=b_3+(\alpha_{21}+1)n_1=b_4+n_3=b_5+n_2=b_6+\alpha_3n_3\\
c_2&=&b_1+(\alpha_2-1)n_2+(\alpha_3-1)n_3=\alpha_2n_2+\alpha_3n_3+(\alpha_4-1)n_4=b_3+\alpha_2n_2\\
&=&b_4+(\alpha_1-1)n_1=b_5+\alpha_{21}n_1+(\alpha_3-1)n_3=b_6+\alpha_3n_3.
\end{eqnarray*}
\end{cor}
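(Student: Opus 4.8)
The plan is to read off the internal shift degrees of the free modules $A^6$ and $A^2$ directly from the graded matrices $\phi_2$ and $\phi_3$ of Theorem~\ref{4generatedpseudo}, exactly as in the proof of Corollary~\ref{cor4}. Since that resolution is graded with $\deg x_i=n_i$, each map is homogeneous: if the $j$-th basis vector of $A^6$ has degree $b_j$ and the $i$-th basis vector of the copy of $A^5$ carrying $\phi_1$ has degree $\deg f_i$, then every nonzero entry $(\phi_2)_{ij}$ is a monomial of degree $b_j-\deg f_i$, and likewise every nonzero entry $(\phi_3)_{jk}$ has degree $c_k-b_j$. The whole statement is therefore just the assertion that these shift degrees are well defined, together with the several monomial readings that realise each of them.

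First I would establish the equalities for the $b_j$. Multiplying the row $\phi_1=(f_1,\dots,f_5)$ by the $j$-th column of $\phi_2$ produces an element of $A$ which $\phi_1\phi_2=0$ forces to vanish; because the resolution is graded, all the summands $f_i\cdot(\phi_2)_{ij}$ occurring in that column share one common degree, which is by definition $b_j$. Reading the degree of each nonzero summand then gives one expression for $b_j$ per nonzero entry of the column. Carrying this out for the columns $j=1,3,4,5,6$ yields precisely the listed identities; the column $j=2$, whose only entries are $f_3$ and $-f_2$, contributes only the trivial Koszul syzygy $f_2f_3-f_3f_2$, hence no linear relation among the $f_i$ of this type, which is why the statement records no $b_2$. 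The several alternative right-hand sides for a single $b_j$ come from the fact that each binomial $f_i$ is itself homogeneous: for instance $f_5$ gives $(\alpha_{21}+1)n_1+(\alpha_3-1)n_3=n_2+(\alpha_4-1)n_4$ and $f_1$ gives $\alpha_1n_1=n_3+(\alpha_4-1)n_4$, and one uses these to rewrite one monomial degree as another.

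Next I would treat $c_1$ and $c_2$ in the same spirit, now comparing degrees in the composite $\phi_2\phi_3=0$, equivalently reading the homogeneous degree of each nonzero entry of $\phi_3$. Since the entry in row $j$, column $k$ of $\phi_3$ has degree $c_k-b_j$, each nonzero entry of the $k$-th column yields the equation $c_k=b_j+\deg(\phi_3)_{jk}$; running over the two columns of $\phi_3$ then produces the two displayed blocks of equalities. Where a form is phrased against a $b_j$ that does not label a nonzero entry of the relevant column, it is obtained by substituting one of the homogeneity relations above, just as for the $b_j$.

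The conceptual content is minimal once the graded structure of Theorem~\ref{4generatedpseudo} is granted; the genuine work is bookkeeping. The main obstacle is keeping the many competing degree expressions straight and choosing, for each target form, the correct homogeneity relation among $n_1,\dots,n_4$ (coming from $f_1,\dots,f_5$ and from the defining formulas of Theorem~\ref{kom}) that converts the monomial one reads off the matrix into the monomial displayed in the statement. I would organise this as a short table matching each nonzero entry of $\phi_2$ and $\phi_3$ to its degree, and then confirm the cross-identities by a handful of substitutions.
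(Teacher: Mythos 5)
Your proposal is correct and follows essentially the same route as the paper: the authors likewise obtain each $b_j$ by multiplying $\phi_1$ with the $j$-th column of $\phi_2$ and comparing degrees of the homogeneous summands (omitting $b_2=\deg(f_2)+\deg(f_3)$ as it yields nothing new), and read $c_1,c_2$ off as the inner degrees of $A(-c_1)\oplus A(-c_2)$ via $\phi_3$. The only difference is that you spell out the bookkeeping in more detail than the paper does.
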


 \begin{proof} This follows from the different ways to determine the degrees of $H_2({\bf F})$ and $H_3({\bf F})$ in the resolution ${\bf F}$
 in the same way as in the proof of Corollary~\ref{cor4}. So, if we multiply $\phi_1$ with the first column of $\phi_2$ we get $f_1x_2+f_3x_1^{\alpha_{21}+1}-f_5x_3$ whose degree $b_1$ is the inner degree where we have $\beta_{2,b_1}=1$. We did not include $b_2=\deg(f_2)+\deg(f_3)$ as it does not give any new relation. The last two numbers $c_1$ and $c_2$ give the inner degrees of the last free module $A(-c_1) \oplus A(-c_2)$ in the resolution.
  \end{proof}
  
  \begin{cor} If $S=\langle n_1,\dots,n_4\rangle $ is a $4$-generated pseudosymmetric semigroup, then
$PF(S)=\{ \alpha_1 n_1+n_2+n_4-N,\alpha_1n_1+\alpha_2 n_2+(\alpha_3-1)n_3-N\}$, where   $N=\sum_{i=1}^4n_i$.
\end{cor}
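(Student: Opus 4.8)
The plan is to read $PF(S)$ directly off the tail of the resolution in Theorem~\ref{4generatedpseudo} by way of Proposition~\ref{bettisoc}. Here $k=4$, so Proposition~\ref{bettisoc} says that $n\in PF(S)$ if and only if $\beta_{3,n+N}\neq0$, with each such graded Betti number equal to $1$. The last free module in the resolution is $A^2=A(-c_1)\oplus A(-c_2)$, where $c_1$ and $c_2$ are exactly the two inner degrees of the final module determined in Corollary~\ref{corpseudo}. Because the resolution is minimal and graded, the only nonzero graded pieces of $\beta_3$ are $\beta_{3,c_1}$ and $\beta_{3,c_2}$; hence $PF(S)=\{c_1-N,\,c_2-N\}$, and it remains only to evaluate $c_1$ and $c_2$ as explicit $\bZ$-linear combinations of the $n_i$.

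First I would extract $c_1$. From Corollary~\ref{corpseudo} we have $b_5=\alpha_1n_1+n_4$ together with $c_1=b_5+n_2$, so $c_1=\alpha_1n_1+n_2+n_4$, and therefore $c_1-N=\alpha_1n_1+n_2+n_4-N$, which is the first listed element of $PF(S)$.

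Next I would extract $c_2$. Again from Corollary~\ref{corpseudo}, $b_1=\alpha_1n_1+n_2$ and $c_2=b_1+(\alpha_2-1)n_2+(\alpha_3-1)n_3$; combining these gives $c_2=\alpha_1n_1+\alpha_2n_2+(\alpha_3-1)n_3$, hence $c_2-N=\alpha_1n_1+\alpha_2n_2+(\alpha_3-1)n_3-N$, the second listed element. This finishes the identification of $PF(S)$.

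There is essentially no obstacle beyond careful bookkeeping, since everything has been prepackaged: Corollary~\ref{corpseudo} supplies the inner degrees $c_1,c_2$ and Proposition~\ref{bettisoc} supplies the dictionary between top Betti degrees and pseudofrobenius numbers. The only point worth a sentence of verification is that $c_1\neq c_2$, so that the rank-two top of the resolution genuinely yields two distinct pseudofrobenius numbers rather than one with multiplicity; this matches the fact that a pseudosymmetric $S$ has type $2$, i.e.\ $|PF(S)|=2$.
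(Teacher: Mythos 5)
Your proposal is correct and follows exactly the paper's own route: apply Proposition~\ref{bettisoc} to identify $PF(S)=\{c_1-N,c_2-N\}$ and then read off $c_1=\alpha_1n_1+n_2+n_4$ and $c_2=\alpha_1n_1+\alpha_2n_2+(\alpha_3-1)n_3$ from Corollary~\ref{corpseudo}. The only difference is that you make explicit which of the listed expressions for $b_1,b_5,c_1,c_2$ you combine, which the paper leaves implicit.
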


\begin{proof} By Proposition \ref{bettisoc}, we know that $PF(S)=\{c_1-N,c_2-N\}$, where the numbers $c_1=\alpha_1n_1+n_2+n_4$ and 
$c_2=\alpha_1n_1+\alpha_2 n_2+(\alpha_3-1)n_3$ by Corollary \ref{corpseudo}.
 \end{proof}

\begin{ex} \label{pseudoex} Let $S=\langle 13,9,11,14\rangle$. Then $\alpha_1=\alpha_2=\alpha_4=3,\alpha_3=2,\alpha_{21}=1$ and $S$ is $4$-generated pseudosymmetric. We get $(b_1,b_2,b_3,b_4,b_5,b_6)=(48,49,50,51,53,55)$,
$PF(S)=\{39+9+14-47,39+11+27-47\}=\{15,30\}$ and

$\sum_{s\in S}z^s=(1-z^{39}-z^{27}-z^{22}-z^{42}-z^{37}+z^{48}+z^{49}+z^{50}+z^{51}+z^{53}+z^{55}-z^{62}-z^{77})/((1-z^{13})(1-z^9)(1-z^{11})(1-z^{14}))=
1+z^9+z^{11}+z^{13}+z^{14}+z^{18}+z^{20}+z^{22}+z^{23}+z^{24}+z^{25}+z^{26}+z^{27}+z^{28}+z^{29}+z^{31}/(1-z)$.
\end{ex}

\section{Strongly indispensable minimal Free Resolutions}
Motivated by the open questions listed at the end of \cite{haraJA}, our main aim here is to classify numerical semigroup rings with small embedding dimensions whose minimal free resolutions are strongly indispensable.  Toric ideals generated minimally by indispensable binomials or equivalently those having a unique minimal generating set are of special importance for some emerging problems arising from Algebraic Statistics, see e.g. \cite{tak}. We recall briefly that indispensable binomials are constant multiples of those binomials that appear in every minimal \textit{binomial} generating set. Strongly indispensable binomials are those whose constant multiples are present in \textit{every} (not necessarily binomial) minimal generating set. Similarly, one can talk about (strong) indispensability of higher syzygies, by requiring that they must be present in every (not necessarily simple) minimal free resolution, see \cite{haraCM,haraJA} for more technicalities. It follows from \cite[Corollary 4.2]{haraCM} that the two concepts coincide for $0$-syzygies (binomial generators), since clearly every strongly indispensable $i$-syzygy is indispensable. Nevertheless, not every indispensable $i$-syzygy is strongly indispensable for $i>0$ by \cite[Example 6.5]{haraJA}.

For a graded minimal free $A$-resolution
$${\bf F}: \ 0\longrightarrow A^{\beta_{k-1}}\stackrel{\phi_{k-1}}{\longrightarrow}A^{\beta_{k-2}}\stackrel{\phi_{k-2}}{\longrightarrow}\cdots\stackrel{\phi_2}{\longrightarrow}A^{\beta_1}\stackrel{\phi_1}{\longrightarrow}A^{\beta_0}{\longrightarrow}K[S]{\longrightarrow}0 $$
of $K[S]$, let $A^{\beta_{i}}$ be generated in degrees $s_{i,j}\in S$, which we call  $i$-Betti degrees, i.e. $\displaystyle A^{\beta_{i}}=\bigoplus_{j=1}^{\beta_i} A[{-s_{i,j}}]$.

The resolution $({\bf F},\phi)$ is strongly indispensable if for any graded minimal resolution $({\bf G},\theta)$, we have an injective complex map
$i\colon({\bf F},\phi)\longrightarrow({\bf G},\theta)$.
The following will be very useful for accomplishing the classification of numerical semigroups of small embedding dimensions whose minimal free resolutions are strongly indispensable.  We consider the partial order on $S$ given by  $s_1\succ_S s_2$ if $s_1-s_2 \in S$.

\begin{lm}\label{indispensable} A minimal graded free resolution of $K[S]$ is strongly indispensable if and only if the differences between the $i$-Betti degrees do not belong to $S$ for all $i=1,\dots,k-1$.
\end{lm}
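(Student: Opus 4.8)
My plan is to recast strong indispensability as a rigidity statement about graded comparison maps and then read that rigidity off from degree bookkeeping. Recall that any two minimal graded free resolutions of $K[S]$ are isomorphic; the substantive content of strong indispensability is that for every minimal resolution $(\mathbf G,\theta)$ the injective complex map $(\mathbf F,\phi)\to(\mathbf G,\theta)$ can be arranged to carry each free generator of $F_i$ to a nonzero scalar multiple of a generator of $G_i$. Equivalently, each generator, i.e.\ each column of $\phi_i$, must reappear up to a nonzero scalar among the columns of $\theta_i$, for every $i$. So I will work level by level with the common graded structure $F_i=\bigoplus_j A[-s_{i,j}]$ and $G_i=\bigoplus_j A[-s_{i,j}]$ (both resolutions have the same Betti degrees), using the single elementary fact that the graded piece $A_d$ is nonzero exactly when $d\in S$, with $A_0=K$.

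First I would record the shape of an arbitrary degree-$0$ graded homomorphism $\psi\colon F_i\to G_i$: writing $\psi(e_b)=\sum_a \psi_{ab}e'_a$, each entry $\psi_{ab}$ is homogeneous of degree $s_{i,b}-s_{i,a}$, so $\psi_{ab}=0$ unless $s_{i,b}-s_{i,a}\in S$, while $\psi_{aa}\in A_0=K$. For the ``if'' direction I assume that at every level the pairwise differences of the $i$-Betti degrees avoid $S$. Then for $a\neq b$ the entry $\psi_{ab}$ lies in $A_{s_{i,b}-s_{i,a}}=0$, so every such $\psi$ (in particular every comparison isomorphism $\mathbf F\to\mathbf G$) is forced to be diagonal with nonzero scalar entries. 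Hence each generator of $F_i$ is sent to a nonzero scalar times the corresponding generator of $G_i$, which is exactly the injective complex map witnessing that $\mathbf F$ is strongly indispensable.

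For the converse I argue by contraposition and build an explicit competitor. Suppose at some level $i$ there are indices with $s_{i,1}-s_{i,2}=d\in S$. I choose $0\neq p\in A_d$ (a nonzero scalar if $d=0$, a nonconstant element of the maximal ideal $\mathcal M=(x_1,\dots,x_k)$ if $d\neq0$) and let $C\colon F_i\to F_i$ be the degree-$0$ automorphism $e_1\mapsto e_1+p\,e_2$ fixing the other basis vectors. I then define $(\mathbf G,\theta)$ to agree with $\mathbf F$ except that $\theta_i=\phi_iC^{-1}$ and $\theta_{i+1}=C\phi_{i+1}$. This is again a resolution, since $C$ is an isomorphism, and it is still minimal because the entries of $C,C^{-1}$ lie in $A$ while those of $\phi_i,\phi_{i+1}$ lie in $\mathcal M$, so the products keep every differential entry in $\mathcal M$. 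In $\mathbf G$ the first $i$-syzygy becomes $\phi_i(e_1)-p\,\phi_i(e_2)$, and I would verify that no nonzero scalar multiple of the original column $\phi_i(e_1)$ occurs among the columns of $\theta_i$, using that $\phi_i(e_1),\phi_i(e_2)$ belong to a minimal generating set so that subtracting the nonzero element $p\,\phi_i(e_2)$ genuinely changes the generator. Thus that syzygy is absent from some minimal resolution, and $\mathbf F$ is not strongly indispensable.

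The bookkeeping that we actually have complexes, and that $A_d\neq0\iff d\in S$, is immediate. The main obstacle is the converse construction: I must ensure that the perturbed complex $\mathbf G$ remains a minimal resolution and that the perturbation truly removes the offending generator up to scalars, including the degenerate case $d=0$ of a repeated Betti degree, where the freedom to mix two generators sharing one degree already exhibits a minimal resolution in which neither original generator is forced to appear.
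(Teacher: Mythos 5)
Your argument is correct in substance but takes a genuinely different route from the paper's. The paper disposes of this lemma in one sentence: it invokes Theorem 4.7 of \cite{haraCM} and merely translates that theorem's hypothesis, observing that ``the differences of the $i$-Betti degrees do not belong to $S$'' is equivalent to ``all $i$-Betti degrees are distinct and $\succ_S$-minimal,'' i.e.\ the relevant graded Betti numbers equal one and the degrees are minimal. You instead reprove the content of that citation from scratch: a rigidity argument for sufficiency (a degree-$0$ graded map $\bigoplus_j A[-s_{i,j}]\to\bigoplus_j A[-s_{i,j}]$ has $(a,b)$-entry in $A_{s_{i,b}-s_{i,a}}$, which vanishes unless the difference lies in $S$, forcing every comparison isomorphism to be diagonal with nonzero scalars), and an explicit automorphism $e_1\mapsto e_1+p\,e_2$ producing a competing minimal resolution for necessity, correctly covering the repeated-degree case $d=0$. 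This buys self-containedness at the price of length. Two points need tightening. First, your opening ``equivalently''---that strong indispensability means each column of $\phi_i$ reappears up to nonzero scalar among the columns of $\theta_i$---is precisely the nontrivial translation the paper outsources to \cite{haraCM}: as literally stated here, the definition (existence of an injective complex map into every minimal resolution) is vacuous, since any two minimal graded resolutions are isomorphic, so your operational reading must be derived from the definitions in \cite{haraCM,haraJA} rather than asserted. Second, in the converse you only check that $\phi_i(e_1)$ is not a scalar multiple of a column of $\theta_i$; since a comparison isomorphism may permute generators of equal degree, you should note that the same Nakayama and degree considerations rule out \emph{every} degree-$0$ comparison map carrying all basis vectors to scalar multiples of basis vectors once $\theta_i(e_1)=\phi_i(e_1)-p\,\phi_i(e_2)$.
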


\begin{proof} This follows from Theorem $4.7$ in \cite{haraCM}, since, for each $i$, the differences of $i$-Betti degrees do not belong to the semigroup $S$ if an only if all  $i$-Betti degrees are different and minimal with respect to $\succ_S$ if and only if the corresponding graded Betti numbers are one and all $i$-Betti degrees are minimal with respect to $\succ_S$.
\end{proof}

\begin{ex} If $S$ is $2$-generated,  then trivially $K[S]$ has a strongly indispensable minimal free resolution, because $\beta_0= \beta_1=1$.
 If $S$ is  $3-$generated and not symmetric, then $K[S]$ has a strongly indispensable minimal free resolution, as stated in \cite{haraCM} preceding to Theorem $4.9$, by \cite[Theorem 4.2]{peeva} and \cite[Theorem 4.7]{haraCM}, since the corresponding ideal $I_S$ is \textbf{generic}. 
 \end{ex}

We now single out a special case in which $K[S]$ is Gorenstein.  In this case it
suffices to check the differences of the first half of the indices:

\begin{lm} If $S$ is a symmetric and $k$-generated semigroup, then a minimal graded free resolution of $K[S]$ is strongly indispensable if and only if the differences between the $i$-Betti degrees do not belong to $S$ for  $1\le i \leq (k-1)/2$.
\end{lm}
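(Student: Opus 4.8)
The plan is to exploit the Gorenstein property. Since $S$ is symmetric, $K[S]$ is a Gorenstein ring by \cite{ku}, and a minimal graded free resolution $\mathbf{F}$ of a graded Gorenstein quotient is self-dual up to a twist. I would first record this duality at the level of Betti degrees. As $K[S]$ is $1$-dimensional Cohen--Macaulay, its resolution has length $c=k-1$ and $\beta_{k-1}=1$; let $D=s_{k-1,1}$ denote the unique top Betti degree (equal to $g(S)+N$ by Proposition~\ref{bettisoc}). Applying $\mathrm{Hom}_A(-,A)$ to $\mathbf{F}$ and reindexing produces the minimal free resolution of $\mathrm{Ext}^{c}_A(K[S],A)$, which for a Gorenstein ring is $K[S]$ up to a shift; matching the bottom term $(F_0)^{*}=A$, now sitting in homological degree $c$, against $F_c=A(-D)$ pins the twist to $D$. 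Comparing the two minimal resolutions term by term then yields $\beta_{i,j}=\beta_{k-1-i,\,D-j}$ for all $i,j$. Equivalently, there is a bijection $\sigma$ between the $i$-Betti degrees and the $(k-1-i)$-Betti degrees with
$$s_{i,j}=D-s_{k-1-i,\sigma(j)}.$$

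The second step is purely formal. From the displayed relation,
$$s_{i,j}-s_{i,j'}=s_{k-1-i,\sigma(j')}-s_{k-1-i,\sigma(j)},$$
so as $(j,j')$ runs over all ordered pairs, the multiset of pairwise differences of the $i$-Betti degrees coincides with that of the $(k-1-i)$-Betti degrees. In particular a difference of $i$-Betti degrees lies in $S$ if and only if the corresponding difference of $(k-1-i)$-Betti degrees does; hence the condition in Lemma~\ref{indispensable} holds at index $i$ exactly when it holds at index $k-1-i$.

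Finally I would assemble the reduction. By Lemma~\ref{indispensable}, strong indispensability is equivalent to the conjunction, over $i=1,\dots,k-1$, of the statements ``the differences of the $i$-Betti degrees avoid $S$''. Pairing each index $i$ with $k-1-i$ via the equivalence just established collapses this conjunction to the range $1\le i\le (k-1)/2$: when $k-1$ is even the middle index $i=(k-1)/2$ is self-paired and is retained, and when $k-1$ is odd the two central indices $(k-2)/2$ and $k/2$ are paired, so that only $i\le (k-1)/2$ need be checked. This is exactly the asserted criterion.

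The main obstacle is the first step: making the Betti-degree symmetry $\beta_{i,j}=\beta_{k-1-i,\,D-j}$ precise, that is, identifying the exact twist $D$ and verifying that the dualized, reindexed complex is again a \emph{minimal} resolution, so that the graded Betti numbers genuinely match under the involution $j\mapsto D-j$. Everything after that is routine bookkeeping with this involution.
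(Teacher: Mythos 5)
Your proposal is correct and follows essentially the same route as the paper: both arguments rest on the self-duality of the graded Betti numbers of the Gorenstein resolution, $s_{i,j}+s_{k-1-i,\sigma(j)}=D$ with $D=s_{k-1,1}$, followed by the observation that this involution preserves the multiset of pairwise differences at each pair of indices $i$ and $k-1-i$. The only difference is that you derive the symmetry by dualizing the resolution with $\mathrm{Hom}_A(-,A)$, whereas the paper simply cites Buchsbaum--Eisenbud and Stanley for the same fact.
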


\begin{proof} It is known by \cite{be} that the resolution is symmetric, i.e. $\beta_i=\beta_{k-1-i}$ and as pointed out by Stanley in the second proof of Theorem $4.1.$ in \cite{sta}, the generators of $A^{\beta_i}$ can be labeled in such a way that their degrees $s_{i,j}$ satisfy $\displaystyle s_{i,j}+s_{k-1-i,\beta_i-j+1}=s_0$, for all $j=1,\dots,\beta_i$, where $s_0=s_{k-1,1}$. Therefore, the differences satisfy $$s_{k-1-i,\beta_i-j+1}-s_{k-1-i,\beta_i-h+1}=s_{i,h}-s_{i,j},$$
 for $1\le i\le (k-1)/2$, completing the proof.
\end{proof}
For $3$ or $4$-generated symmetric semigroups, this reduces our problem to investigate the differences of the 1-Betti degrees, since in this case $\lfloor(k-1)/2\rfloor=1$.

\begin{cor}\label{3-4} If $S$ is a $3$ or $4$-generated symmetric semigroup, then a minimal graded free resolution of $K[S]$ is strongly indispensable if and only if the toric ideal $I_S$ is generated by indispensable binomials. \hfill$\Box$
\end{cor}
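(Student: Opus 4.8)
The plan is to exploit the preceding lemma (the symmetric refinement of Lemma~\ref{indispensable}) together with the numerical coincidence $\lfloor(k-1)/2\rfloor=1$ for $k=3,4$. First I would invoke that lemma to reduce the strong indispensability of a minimal graded free resolution of $K[S]$, for a symmetric $3$- or $4$-generated $S$, to a single condition at homological level $i=1$: namely, that the differences between the $1$-Betti degrees do not belong to $S$. All higher levels are forced automatically, since the symmetry relation $s_{i,j}+s_{k-1-i,\beta_i-j+1}=s_0$ used in that lemma shows the differences at level $k-1-i$ coincide (up to sign) with those at level $i$, so for $k=3,4$ there is nothing to check beyond $i=1$.

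Next I would translate this surviving condition into a statement about the minimal generators of $I_S$. The $1$-Betti degrees are exactly the $S$-degrees of a minimal binomial generating set of $I_S$, i.e. of the $0$-syzygies in the terminology of \cite{haraCM}. Using the chain of equivalences already recorded in the proof of Lemma~\ref{indispensable}, the differences of the $1$-Betti degrees avoid $S$ if and only if all $1$-Betti degrees are distinct and minimal with respect to $\succ_S$, equivalently if and only if the corresponding graded Betti numbers $\beta_{1,j}$ are one and all $1$-Betti degrees are $\succ_S$-minimal. By \cite[Corollary 4.2]{haraCM} strong indispensability and indispensability coincide for $0$-syzygies, so this last condition is precisely the assertion that every minimal generator of $I_S$ appears, up to a constant, in every minimal binomial generating set, that is, that $I_S$ is generated by indispensable binomials. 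Chaining the two equivalences yields the corollary.

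The main obstacle is conceptual rather than computational: one must track the indexing shift carefully, since the binomial generators of $I_S$ are the $0$-syzygies but sit in homological degree $1$ and are therefore governed by the $1$-Betti degrees, and one must confirm that ``generated by indispensable binomials'' matches exactly the degree condition ``$\beta_{1,j}$ are one and all $1$-Betti degrees are $\succ_S$-minimal''. Once these identifications are in place the argument is immediate, which explains why the statement is recorded with no separate proof beyond the reduction to $i=1$; the substantive work has already been carried out in Lemma~\ref{indispensable} and in its symmetric refinement.
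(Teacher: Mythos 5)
Your argument is correct and follows exactly the route the paper intends: the symmetric lemma reduces strong indispensability to the condition at homological level $i=1$ since $\lfloor(k-1)/2\rfloor=1$ for $k=3,4$, and the chain of equivalences from Lemma~\ref{indispensable} together with \cite[Corollary 4.2]{haraCM} identifies that condition with $I_S$ being generated by indispensable binomials. The paper records the corollary with no separate proof precisely because this is the intended (immediate) argument, so your write-up matches it.
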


Assume that $S=\langle n_1,n_2,n_3 \rangle$ is symmetric. By \cite{he}, up to a permutation of indices, $n_3 \in \langle \frac{n_1}{\a_3},\frac{n_2}{\a_3}\rangle$, where $\a_3=\gcd(n_1,n_2)$. Letting $m_i=\frac{n_i}{\a_3}$ for $i=1,2$, and $n_3=\a_{31}m_1+\a_{32}m_2$ for some non-negative integers $\a_{31}$ and $\a_{32}$, we obtain that $S=\langle \a_3m_1,\a_3m_2,\a_{31}m_1+\a_{32}m_2 \rangle$. In this case, $I_S=(F_1,F_2)$ with $F_1=x_1^{m_2}-x_2^{m_1}$ and $F_2=x_3^{\a_3}-x_1^{\a_{31}}x_2^{\a_{32}}$.

 It is known that complete intersection semigroup rings have minimal free resolutions that are indispensable if and only if differences of the first Betti degrees do not belong to the semigroup, see \cite[Theorem 4.4]{haraCM}. The following extends this to \textit{strongly} indispensable minimal free resolutions.

\begin{prop} \label{prop3ci}Let $S=\langle \a_3m_1,\a_3m_2,\a_{31}m_1+\a_{32}m_2 \rangle$. $K[S]$ has a strongly indispensable minimal free resolution if and only if $(\a_{31},\a_{32})$ is unique with $\dis \a_{31}\a_{32} \neq 0$.
\end{prop}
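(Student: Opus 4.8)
The plan is to reduce the statement to a purely numerical incomparability condition on two Betti degrees, and then translate that condition into the stated arithmetic of $(\a_{31},\a_{32})$. Since $S$ is $3$-generated and symmetric, $K[S]$ is a complete intersection with $I_S=(F_1,F_2)$, $F_1=x_1^{m_2}-x_2^{m_1}$ and $F_2=x_3^{\a_3}-x_1^{\a_{31}}x_2^{\a_{32}}$, so its minimal resolution is the Koszul complex on $F_1,F_2$ with $\beta_0=1,\beta_1=2,\beta_2=1$. The two $1$-Betti degrees are the $S$-degrees $s_{1,1}=\deg F_1=\a_3 m_1 m_2=m_2n_1=m_1n_2$ and $s_{1,2}=\deg F_2=\a_3 n_3=\a_{31}n_1+\a_{32}n_2$. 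Since $\beta_2=1$ makes the index $i=2$ vacuous (in accordance with Corollary~\ref{3-4} and the reduction preceding it), Lemma~\ref{indispensable} shows that the resolution is strongly indispensable if and only if $s_{1,1}$ and $s_{1,2}$ are incomparable for $\succ_S$, i.e. neither $s_{1,2}-s_{1,1}$ nor $s_{1,1}-s_{1,2}$ lies in $S$.

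Next I would record two arithmetic facts that drive everything. First, $\gcd(m_1,m_2)=1$, so the nonnegative representations of $n_3$ in $\langle m_1,m_2\rangle$ are exactly $(\a_{31}-sm_2,\a_{32}+sm_1)$ for integers $s$ keeping both entries nonnegative; hence $(\a_{31},\a_{32})$ is the unique representation if and only if $\a_{31}<m_2$ and $\a_{32}<m_1$. Second, since $\gcd(n_1,n_2,n_3)=1$ and $\gcd(n_1,n_2)=\a_3$, any common divisor of $\a_3$ and $n_3$ divides $\gcd(n_1,n_2,n_3)=1$, so $\gcd(\a_3,n_3)=1$. The proof then splits into the two membership questions.

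For $s_{1,2}-s_{1,1}$ the easy implication is that if $\a_{31}\ge m_2$ or $\a_{32}\ge m_1$, then $s_{1,2}-s_{1,1}=(\a_{31}-m_2)n_1+\a_{32}n_2=\a_{31}n_1+(\a_{32}-m_1)n_2$ is visibly a nonnegative combination, so it lies in $S$; and not being unique forces exactly one of these inequalities. For the converse I would start from a hypothetical $\sum c_i n_i=\a_3 n_3-m_2 n_1$ with $c_i\ge0$, rewrite it as $(c_1+m_2)n_1+c_2n_2+c_3n_3=\a_3 n_3$, reduce modulo $\a_3$ and use $\gcd(\a_3,n_3)=1$ to force $\a_3\mid c_3$; since the left side has a positive $n_1$-part we get $0\le c_3<\a_3$, hence $c_3=0$. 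Dividing by $\a_3$ then yields a representation of $n_3$ in $\langle m_1,m_2\rangle$ whose first coefficient $c_1+m_2\ge m_2>\a_{31}$, contradicting uniqueness. Thus $s_{1,2}-s_{1,1}\in S$ exactly when $(\a_{31},\a_{32})$ is not unique. A parallel computation handles $s_{1,1}-s_{1,2}$: assuming it lies in $S$, the same $\gcd(\a_3,n_3)=1$ reduction gives $c_3=\a_3 t$ with $t\ge0$ and, after dividing by $\a_3$, an identity $Am_1+Bm_2=m_1m_2$ with $A\ge\a_{31}$ and $B\ge\a_{32}$. If $\a_{31},\a_{32}\ge1$ then $A,B\ge1$, so $\gcd(m_1,m_2)=1$ forces $m_2\mid A$ and $m_1\mid B$, whence $A\ge m_2$, $B\ge m_1$ and $Am_1+Bm_2\ge 2m_1m_2$, a contradiction; hence $s_{1,1}-s_{1,2}\notin S$ whenever $\a_{31}\a_{32}\ne0$. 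Conversely, if $\a_{31}=0$ (resp. $\a_{32}=0$) then $s_{1,2}$ and $s_{1,1}$ differ by a multiple of $n_2$ (resp. $n_1$), so they are comparable and the resolution is not strongly indispensable.

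Assembling these: when $\a_{31}\a_{32}=0$ the two degrees are comparable, and when $\a_{31},\a_{32}\ge1$ the difference $s_{1,1}-s_{1,2}$ never lies in $S$, so incomparability is equivalent to $s_{1,2}-s_{1,1}\notin S$, i.e. to uniqueness of $(\a_{31},\a_{32})$; this yields exactly the stated criterion. I expect the main obstacle to be the two converse (non-membership) arguments: the crux is the descent that uses $\gcd(\a_3,n_3)=1$ to kill the $x_3$-exponent and push the problem down to the numerical semigroup $\langle m_1,m_2\rangle$, after which $\gcd(m_1,m_2)=1$ closes it. Keeping careful track of the positivity hypothesis $\a_{31}\a_{32}\ne0$, which is needed only for the $s_{1,1}-s_{1,2}$ direction, is the subtle bookkeeping point.
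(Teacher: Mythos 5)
Your proposal is correct, and its overall skeleton coincides with the paper's: both reduce, via Corollary~\ref{3-4} and Lemma~\ref{indispensable}, to checking that neither difference of the two $1$-Betti degrees $\deg F_1=\a_3m_1m_2$ and $\deg F_2=\a_3n_3$ lies in $S$ (the single $2$-Betti degree being vacuous), both identify uniqueness of $(\a_{31},\a_{32})$ with the inequalities $\a_{31}<m_2$, $\a_{32}<m_1$, and both handle the ``easy'' direction by exhibiting the difference explicitly as a nonnegative combination when $\a_{31}\a_{32}=0$ or uniqueness fails. The one genuine divergence is in the verification of the two non-membership claims. The paper argues that $\deg(F_2)-\deg(F_1)=\a_{32}n_2-(m_2-\a_{31})n_1\in S$ would contradict $\a_2=m_1$ being the least positive integer with $\a_2n_2\in\langle n_1,n_3\rangle$ (a fact imported from Herzog's structure theory), and dispatches the other direction with ``similarly''; it also notes that the statement is \cite[Theorem 17]{garcia-ojeda}. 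You instead run a self-contained descent: from a putative representation you reduce modulo $\a_3$, use $\gcd(\a_3,n_3)=1$ to force the $n_3$-coefficient to be a multiple of $\a_3$, divide through by $\a_3$, and land in the $2$-generated semigroup $\langle m_1,m_2\rangle$, where $\gcd(m_1,m_2)=1$ finishes the argument (for $\deg F_1-\deg F_2$ via $Am_1+Bm_2=m_1m_2$ forcing $A\ge m_2$, $B\ge m_1$). This buys independence from the minimality of $\a_2$ and makes the ``similarly'' step fully explicit, at the cost of a slightly longer computation. One small imprecision: non-uniqueness forces \emph{at least} one of $\a_{31}\ge m_2$, $\a_{32}\ge m_1$, not ``exactly one'' (both can hold), but this does not affect your argument since either inequality already puts $\deg F_2-\deg F_1$ in $S$.
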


\begin{proof} By virtue of Corollary \ref{3-4}, it is sufficient to check that the ideal $I_S$ has a unique minimal generating set which is proved in \cite[Theorem 17]{garcia-ojeda}. For the convenience of the reader we give an  alternative proof not using the theory they developed there.

Assume that $\a_{31}\a_{32} = 0$ or $(\a_{31},\a_{32})$ is not unique. We prove then that $|\deg(F_1)-\deg(F_2)| \in S$ so that the minimal free resolution is not (strongly) indispensable by Lemma \ref{indispensable}. Without loss of generality suppose first that $\a_{32}=0$. Then, $F_2=x_3^{\a_3}-x_1^{\a_{31}}$ and thus $|\deg(F_2)-\deg(F_1)|=|\a_{31}-m_2|n_1 \in S$. 

Suppose now that $n_3=\a_{31}m_1+\a_{32}m_2=\a'_{31}m_1+\a'_{32}m_2$, for different tuples $(\a_{31},\a_{32})$ and $(\a'_{31},\a'_{32})$. Then, $F_2$ is not an indispensable binomial, i.e. $\{F_1,F_2\}$ and $\{F_1,x_3^{\a_3}-x_1^{\a'_{31}}x_2^{\a'_{32}}\}$ are two different minimal generating sets for $I_S$. Thus, no minimal free resolution can be indispensable.

For the converse, assume that $(\a_{31},\a_{32})$ is unique with $\dis \a_{31}\a_{32} \neq 0$. In this case, $\a_{31}<m_2$ and $\a_{32}<m_1$, since otherwise $\a_{31}m_1+\a_{32}m_2$ would be $$(\a_{31}-m_2)m_1+(\a_{32}+m_1)m_2\; \mbox{or} \;(\a_{31}+m_2)m_1+(\a_{32}-m_1)m_2.$$ Now $\deg(F_2)-\deg(F_1)=\a_{32}n_2-(m_2-\a_{31})n_1 \in S$ contradicts the fact that $\alpha_2=m_1$ is the least integer with $\alpha_2n_2\in \langle n_1,n_3\rangle$. Similarly, $\deg(F_1)-\deg(F_2)=(m_2-\a_{31})n_1-\a_{32}n_2 \notin S$. Since there is only one $2-$Betti degree, $K[S]$ has a strongly indispensable minimal free resolution by Lemma \ref{indispensable}.
\end{proof}

Since a $3$-generated symmetric semigroup is always complete intersection, using Proposition \ref{prop3ci}, one can easily check when the minimal free resolution is strongly indispensable  as the following illustrates.

\begin{ex} \label{23} Let $\a_3 \geq 2$. Then $S=\langle \a_3\cdot 2,\a_3\cdot3,n_3 \rangle$ has strongly indispensable minimal free resolution if and only if $n_3\in\{5,7\}$.
\end{ex}

Let us now look at symmetric $S=\langle n_1,n_2,n_3,n_4 \rangle$ which is a complete intersection. By \cite{br} or \cite{delorme,ros}, up to a permutation of indices, we have two cases:

Case I: $S'=\langle n_1/\ell,n_2/\ell,n_3/\ell \rangle$ is symmetric as in Proposition \ref{prop3ci}, where the positive integer $\ell={\rm gcd}(n_1,n_2,n_3)$ is the smallest such that $\ell n_4 \in \langle n_1,n_2,n_3\rangle$. If we write $\ell n_4 =\a_{41}n_1+\a_{42}n_2+\a_{43}n_3$ for some non-negative $\a_{41},\a_{42}$ and $\a_{43}$, then $I_S=\langle F_1,F_2,F_3\rangle$, where $F_1$ and $F_2$ are as in Proposition \ref{prop3ci}, and $F_3=x_4^{\ell}-x_1^{\a_{41}}x_2^{\a_{42}}x_3^{\a_{43}}$. Note that $\deg_S(F_i)=\ell \cdot \deg_{S'}(F_i)$, for $i=1,2$. 

Case II: $pp' \in \langle n_1,n_2\rangle \cap \langle n_3,n_4\rangle$ with $p={\rm gcd}(n_1,n_2)$ and $p'={\rm gcd}(n_3,n_4)$. This is equivalent to $p \in \langle \frac{n_3}{p'},\frac{n_4}{p'}\rangle$ and $p' \in \langle \frac{n_1}{p},\frac{n_2}{p}\rangle$. Letting $p=p_3\frac{n_3}{p'}+p_4\frac{n_4}{p'}$ and $p'=p_1\frac{n_1}{p}+p_2\frac{n_2}{p}$ we have $I_S=\langle F_1,F_2,F_3\rangle$, where $F_1=x_1^{\a_{1}}-x_2^{\a_2}$ and $F_2=x_3^{\a_3}-x_4^{\a_4}$ and $F_3=x_1^{p_{1}}x_2^{p_{2}}-x_3^{p_{3}}x_4^{p_4}$, where $\a_i$ is the smallest positive integer such that $\a_in_i \in \langle \{n_1,\dots,n_4\}\setminus \{n_i\}\rangle$, for all $i=1,\dots,4$. 

 The following also extends \cite[Theorem 4.4]{haraCM} to \textit{strongly} indispensable minimal free resolutions.

\begin{tm}\label{thm4ci} Let $S$ be a $4-$generated complete intersection semigroup. Then $K[S]$ has a minimal free resolution that is strongly indispensable if and only if \\
{\rm (Case I)} $(\a_{31},\a_{32})$ is unique with $\dis \a_{31}\a_{32} \neq 0$ and\\
$(\a_{41},\a_{42},\a_{43})$ is unique with at most one of $\a_{41},\a_{42}$ and $\a_{43}$ being zero\\
{\rm (Case II)}  $\dis (p_1,p_2,p_3,p_4)$ is unique with $\dis p_1p_2p_3p_4 \neq 0$.
\end{tm}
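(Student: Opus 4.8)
The plan is to reduce everything to Lemma \ref{indispensable} via the structure of complete intersections, exactly as in the proof of Proposition \ref{prop3ci}. Recall that for a complete intersection $K[S]=A/(F_1,F_2,F_3)$ the minimal free resolution is the Koszul complex on $F_1,F_2,F_3$. Hence the $1$-Betti degrees are $\deg(F_1),\deg(F_2),\deg(F_3)$; the $2$-Betti degrees are $\deg(F_1)+\deg(F_2)$, $\deg(F_1)+\deg(F_3)$, $\deg(F_2)+\deg(F_3)$; and the single $3$-Betti degree is $\deg(F_1)+\deg(F_2)+\deg(F_3)$. By Lemma \ref{indispensable} the resolution is strongly indispensable if and only if, for each homological degree, no difference of Betti degrees lies in $S$. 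A key simplification I would record first is that the differences of $2$-Betti degrees are, up to sign, exactly the differences of $1$-Betti degrees (e.g. $(\deg F_1+\deg F_2)-(\deg F_1+\deg F_3)=\deg F_2-\deg F_3$), and there is only one $3$-Betti degree so that level is automatic. Therefore the whole problem collapses to checking that the three pairwise differences $\deg(F_i)-\deg(F_j)$ do not lie in $S$, i.e. that $I_S$ is generated by indispensable binomials --- the same phenomenon as Corollary \ref{3-4}, now established directly rather than via symmetry of the resolution.

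With that reduction in hand, I would treat the two cases separately. In Case I the hypothesis naturally splits the three generators: $F_1,F_2$ already appear in the subsemigroup $S'$, and $\deg_S(F_i)=\ell\cdot\deg_{S'}(F_i)$ for $i=1,2$, so the condition ``$\deg(F_1)-\deg(F_2)\notin S$'' for the pair $F_1,F_2$ is handled by quoting Proposition \ref{prop3ci} directly: under the hypothesis that $(\a_{31},\a_{32})$ is unique with $\a_{31}\a_{32}\neq0$, the difference $\deg_{S'}(F_1)-\deg_{S'}(F_2)\notin S'$, and I would check this lifts to $\deg_S(F_1)-\deg_S(F_2)\notin S$ (the lift is the delicate bookkeeping point, since membership in $S'$ versus $S$ must be compared carefully through the factor $\ell$). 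The genuinely new work is the pairs involving $F_3=x_4^\ell-x_1^{\a_{41}}x_2^{\a_{42}}x_3^{\a_{43}}$. Here I would argue, mirroring the converse direction of Proposition \ref{prop3ci}, that the uniqueness of $(\a_{41},\a_{42},\a_{43})$ together with ``at most one coordinate zero'' is exactly what forces $F_3$ to be indispensable and forces $\deg(F_3)-\deg(F_i)\notin S$: non-uniqueness would give a second binomial of the same degree (so a non-indispensable generating set), while two vanishing exponents would make $\ell n_4$ a multiple of a single $n_i$, yielding a difference of the form $|\ell-c|n_i\in S$ as in the $\a_{32}=0$ computation already carried out for $F_2$.

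In Case II the three binomials $F_1=x_1^{\a_1}-x_2^{\a_2}$, $F_2=x_3^{\a_3}-x_4^{\a_4}$, $F_3=x_1^{p_1}x_2^{p_2}-x_3^{p_3}x_4^{p_4}$ are more symmetric, and I would run the same $F$-by-$F$ analysis. The conditions $\deg(F_1)-\deg(F_2)\notin S$ and its reverse follow from minimality of the $\a_i$ (each $\a_i$ is the least integer with $\a_in_i$ in the span of the others, so a difference lying in $S$ would contradict that minimality, exactly the contradiction used at the end of the proof of Proposition \ref{prop3ci}). For the pairs involving $F_3$ I would use the hypothesis that $(p_1,p_2,p_3,p_4)$ is unique with $p_1p_2p_3p_4\neq0$: if some $p_i=0$ then $F_3$ degenerates into a binomial supported on fewer variables and one of its exponents exceeds the corresponding $\a_i$, producing a difference in $S$; and non-uniqueness of $(p_1,p_2,p_3,p_4)$ again destroys indispensability of $F_3$. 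For the converse in both cases, I would assume the resolution is strongly indispensable and negate each hypothesis in turn, exhibiting in each instance an explicit difference of Betti degrees lying in $S$ (or an alternative minimal generating set), thereby contradicting Lemma \ref{indispensable}.

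I expect the main obstacle to be the Case I lift through the factor $\ell$: one must verify that indispensability established inside $S'$ genuinely transfers to $S$, and conversely that no difference becomes an element of the larger semigroup $S$ even though it was not an element of $S'$ --- this is precisely the point where the two semigroups in the statement interact, and it is the reason the notational caveat about $\deg_S$ versus $\deg_{S'}$ was flagged in the introduction. Everything else is a bounded number of explicit exponent computations of the shape already modeled in Proposition \ref{prop3ci}, so the proof is essentially a careful case count built on the Koszul reduction.
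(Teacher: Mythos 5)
Your overall strategy---reduce to showing the three binomial generators are indispensable, then check the pairwise degree differences case by case---is the same as the paper's, and your Koszul-complex observation that the differences of $2$-Betti degrees coincide with those of $1$-Betti degrees is a legitimate (and slightly more self-contained) substitute for the paper's appeal to Corollary \ref{3-4}. The necessity direction and the easy sufficiency checks (differences of the form $\deg(F_3)-\deg(F_i)$ contradicting minimality of some $\a_i$, degeneration when too many exponents vanish, non-uniqueness giving a second generating set) are all correctly identified. However, two of the sufficiency verifications are genuinely hard, and your proposal either omits them or proposes a method that would not work.

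First, in Case I the difference $\deg(F_2)-\deg(F_3)=\a_3n_3-\a_{41}n_1-\a_{42}n_2-\a_{43}n_3$ cannot be dispatched by ``mirroring Proposition \ref{prop3ci}'': writing it as $\sum u_in_i\in S$ forces $u_3=\a_{43}=0$ and leads to an equation $\gamma_1n_1+\gamma_2n_2=u_4n_4$ with $\gamma_j=\a_{3j}-\a_{4j}-u_j$ possibly of either sign; the paper must analyze the sign cases and, when both $\gamma_j\ge0$, run a descent replacing $u_4$ by its remainder modulo $\a_4=\ell$ to reach a contradiction with minimality of $\ell$. Nothing in Proposition \ref{prop3ci} models this descent, and it is also where the inequalities $\a_i>\a_{4i}$ (which themselves require the uniqueness hypothesis, via the four rewritings of $\ell n_4$) get used. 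Second, in Case II you claim $\deg(F_2)-\deg(F_1)\notin S$ ``follows from minimality of the $\a_i$.'' It does not: from $\a_3n_3-\a_1n_1=\sum u_in_i$ one cannot isolate a single $\a_jn_j$ in the span of the others. The paper instead rearranges to get an element of $\bZ\{n_1,n_2\}\cap\bZ\{n_3,n_4\}=\bZ\{pp'\}$, writes it as $\delta pp'$ with $\delta>0$, and thereby reduces this pair to the already-settled inequality $\deg(F_2)-\deg(F_3)\notin S$. That lattice-intersection step is the essential idea for this pair and is absent from your plan. Until these two verifications are supplied, the sufficiency direction is not proved.
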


\begin{proof} By the virtue of Corollary \ref{3-4} it is enough to prove that the toric ideal $I_S$ is generated by indispensable binomials which we address below.

Case I: We know that $|\deg_{S'}(F_1)-\deg_{S'}(F_2)| \notin S'$ if and only if $(\a_{31},\a_{32})$ is unique with $\dis \a_{31}\a_{32} \neq 0$, by Proposition \ref{prop3ci}. Even though $\deg_S(F_i)=\ell \cdot \deg_{S'}(F_i)$, for $i=1,2$, we still have that $|\deg_{S}(F_1)-\deg_{S}(F_2)| \notin S$ if and only if $(\a_{31},\a_{32})$ is unique with $\dis \a_{31}\a_{32} \neq 0$, since $s' \in S'$ if and only if $\ell \cdot s' \in S$. 

Necessity of uniqueness of $(\a_{41},\a_{42},\a_{43})$ is obvious as in the proof of Proposition \ref{prop3ci}. To prove necessity of the second part, assume that $\a_{41}=\a_{43}=0$. Then, we get $|\deg(F_1)-\deg(F_3)|=| (m_1-\a_{42})n_2|\in S$. The other two cases can be done similarly. 

Now, we prove sufficiency. Set $\a_1=m_2, \a_2=m_1$ and $\a_4=\ell$. By uniqueness, $\a_i>\a_{4i}$, for $i=1,2,3$. This can be seen from the following equations:
\begin{eqnarray*} \ell n_4&=&\a_{41}n_1+\a_{42}n_2+\a_{43}n_3\\
&=&(\a_{41}-\a_1)n_1+(\a_{42}+\a_2)n_2+\a_{43}n_3\\
&=&(\a_{41}+\a_1)n_1+(\a_{42}-\a_2)n_2+\a_{43}n_3\\
&=&(\a_{41}+\a_{31})n_1+(\a_{42}+\a_{32})n_2+(\a_{43}-\a_3)n_3.
\end{eqnarray*}

Now, $\deg(F_3)-\deg(F_1)=\a_4n_4-\a_1n_1 \in S$ implies $\a_4n_4=a_1n_1+a_2n_2+a_3n_3$ with $a_1 \geq \a_1$ which contradicts to $\a_1>\a_{41}$ as by uniqueness $a_1=\a_{41}$.

Permuting $1$ and $3$ in the indices of $\a_i$, $n_i$ and $a_i$ above gives the identical proof for $\deg(F_3)-\deg(F_2)\notin S$.

Since at least two of the numbers $\a_{41}, \a_{42}$ and $\a_{43}$ are non-zero, either $\a_{41}\neq 0$ or $\a_{42}\neq 0$. As $\deg(F_1)-\deg(F_3)=\a_1n_1-\a_{41}n_1-\a_{42}n_2-\a_{43}n_3 \in S$ implies $(\a_1-\a_{41})n_1\in \langle n_2,n_3,n_4\rangle$, when $\a_{41}\neq 0$, and yields $(\a_2-\a_{42})n_2\in \langle n_1,n_3,n_4\rangle$, when $\a_{42}\neq 0$, both cases contradict to $\a_i$ being the smallest, for $i=1,2$. 

Let $\deg(F_2)-\deg(F_3)=\a_3n_3-\a_{41}n_1-\a_{42}n_2-\a_{43}n_3=\sum_{i=1}^{4}u_in_i \in S$. Since $\a_3$ is the smallest positive integer with $\a_3n_3\in \langle n_1,n_2,n_4\rangle$, it follows that $\a_{43}=u_3=0$. Setting $\gamma _1=\a_{31}-\a_{41}-u_1$ and $\gamma _2=\a_{32}-\a_{42}-u_2$, and by using $\a_3n_3=\a_{31}n_1+\a_{32}n_2$, we get the equality 
\begin{equation} \label{gamma} \gamma _1n_1+\gamma _2n_2=u_4n_4.
\end{equation} Since $u_4n_4\geq 0$, both $\gamma_1$ and $\gamma_2$ can not be negative simultaneously. So, if $\gamma _1<0$ and $\gamma _2 \geq 0$, then equation \ref{gamma} yields $$\gamma_2 n_2=-\gamma_1n_1+u_4n_4 \in \langle n_1,n_3, n_4\rangle,$$ which contradicts to $\a_2$ being the smallest coefficients of $n_2$ with this property, since by the proof of Proposition \ref{prop3ci}, we have $\a_2=m_1>\a_{32}>\gamma_2$. The case $\gamma _2<0$ and $\gamma _1 \geq 0$ is taken care of the same way. Let us analyze   
the case where both $\gamma_1$ and $\gamma_2$ are non-negative, in which case $u_4n_4 \in \langle n_1,n_2,n_3 \rangle$. Since $\a_4$ is the smallest, we must have $u_4 \geq \a_4$. Suppose $u_4=q\a_4+r$ with $0<q$ and $0\leq r <\a_4$. Then, equation \ref{gamma} becomes $\gamma'_1n_1+\gamma'_2n_2=r n_4$, where $\gamma'_1=\a_{31}-(q+1)\a_{41}-u_1$ and $\gamma'_2=\a_{32}-(q+1)\a_{42}-u_2$. Similar arguments as above leads to the case where both $\gamma'_1$ and $\gamma'_2$ are non-negative, which is a contradiction as the coefficient of $n_4$, i.e. $r$, is smaller than $\a_4$ now.

Case II: Necessity of the condition that $\dis (p_1,p_2,p_3,p_4)$ is unique with $\dis p_1p_2p_3p_4 \neq 0$ is easy to check. If $\dis (p'_1,p'_2,p'_3,p'_4)$ is another quadruple, then replacing $F_3$ by $x_1^{p'_{1}}x_2^{p'_{2}}-x_3^{p'_{3}}x_4^{p'_4}$, we obtain two different minimal generating set for the ideal $I_S$. On the other hand, if $p_1=0$, then $\deg(F_1)-\deg(F_3)=(\a_2-p_2)n_2 \in S$, since $\a_i>p_i$ by \cite[Theorem $4$]{br}, for all $i=1,\dots,4$. The other three cases are similar.

Now, we prove sufficiency. If $\deg(F_3)-\deg(F_1) \in S$, then $p_1n_1+p_2n_2-\a_{1}n_1=\sum_{i=1}^{4}u_in_i $, for some non-negative integers $u_i$, which implies that
$$(p_2-u_2)n_2=(\a_{1}-p_1+u_1)n_1+u_3n_3+u_4n_4.$$ As $\a_{1}-p_1>0$, the right hand side is positive and so is $p_2-u_2$. But, this  contradicts to $\a_2$ being the smallest, since $0<p_2-u_2\leq p_2 <\a_2$. One can easily prove that $\deg(F_3)-\deg(F_2) \notin S$ in a similar fashion.

Assume now that $\deg(F_2)-\deg(F_3)\in S$ and so $\a_{3}n_3-p_3n_3-p_4n_4=\sum_{i=1}^{4}u_in_i $, for some non-negative integers $u_i$. Then, we get that $$(\a_{3}-p_3-u_3)n_3=u_1n_1+u_2n_2+(p_4+u_4)n_4>0$$ and that $\a_{3}-p_3-u_3>0$ contradicting to $\a_3$ being the smallest. $\deg(F_1)-\deg(F_3)\notin S$ can be shown similarly.

Suppose finally that $\deg(F_2)-\deg(F_1)\in S$, i.e. $\a_{3}n_3-\a_1n_1=\sum_{i=1}^{4}u_in_i $, for some non-negative integers $u_i$. Then, we obtain $$\a_{3}n_3-u_3n_3-u_4n_4=\a_1n_1+u_1n_1+u_2n_2 \in \bZ\{n_1,n_2\} \cap \bZ\{n_3,n_4\}=\bZ\{pp'\}.$$ Let $\delta pp'=\a_1n_1+u_1n_1+u_2n_2$. Since $\a_1>0$, we have clearly $\delta >0$. Then, the other equality $\a_{3}n_3-u_3n_3-u_4n_4=\delta pp'$ yields the following
$$\deg(F_2)-\deg(F_3)=\a_{3}n_3-pp'=u_3n_3+u_4n_4+(\delta-1) pp' \in S,$$
which we prove absurd in the previous step.
\end{proof}

\begin{ex}  Let $\ell \ge2$. It is easy to see, using Example \ref{23} and Theorem \ref{thm4ci}, that a complete intersection semigroup of the type $S=\langle \ell \cdot 4,\ell \cdot 6,\ell \cdot 5,n_4 \rangle$ has  strongly indispensable minimal free resolution if and only if $n_4\in \{9,11,13\}$. An example for the second case of the previous theorem is produced by the semigroup $$S=\langle 15\cdot 5, 15\cdot 12, 17\cdot 7,17\cdot 8 \rangle.$$
Since $F_3=x_1x_2-x_3x_4$ corresponds to the unique quadruple $(p_1,p_2,p_3,p_4)=(1,1,1,1)$, the minimal free resolution is strongly indispensable.
\end{ex}

\begin{tm}\label{thm4nci} Let $S$ be a symmetric, $4-$generated but not a complete intersection semigroup. Then, the minimal free resolution of $K[S]$ is strongly indispensable.
\end{tm}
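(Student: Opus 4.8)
The plan is to apply Lemma~\ref{indispensable} (via its Gorenstein refinement), which says that for a symmetric $4$-generated semigroup the minimal free resolution is strongly indispensable if and only if the differences of the $1$-Betti degrees do not belong to $S$. By Corollary~\ref{3-4}, this is equivalent to showing that the toric ideal $I_S=(f_1,f_2,f_3,f_4,f_5)$ from Bresinsky's theorem is generated by indispensable binomials. So the whole argument reduces to a single claim: no difference $\deg(f_i)-\deg(f_j)$ lies in $S$, for $i\neq j$ among $\{1,\dots,5\}$.

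First I would record the ten $1$-Betti degrees explicitly from Bresinsky's relations, namely $\deg(f_1)=\alpha_1n_1=\alpha_{13}n_3+\alpha_{14}n_4$, and similarly for $f_2,f_3,f_4$, together with $\deg(f_5)=\alpha_{21}n_1+\alpha_{43}n_3=\alpha_{32}n_2+\alpha_{14}n_4$. Each $f_i$ is a \emph{difference of two monomials whose exponents use distinct variables}, which is the structural fact that makes indispensability plausible. The strategy for each pair $(i,j)$ is the same: assume $\deg(f_i)-\deg(f_j)=\sum_{m=1}^4 u_mn_m\in S$ with $u_m\ge 0$, rewrite using one of the two monomial expressions for each degree so as to isolate a single $n_k$ on one side, and derive a contradiction with the minimality defining the exponents $\alpha_k$ (recall $\alpha_k$ is the least positive integer with $\alpha_kn_k\in\langle\,\{n_1,\dots,n_4\}\setminus\{n_k\}\,\rangle$, and the inequalities $0<\alpha_{ij}<\alpha_i$ guarantee the isolated coefficient stays strictly below the relevant $\alpha_k$). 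This mirrors exactly the contradiction-with-minimality mechanism already used repeatedly in the proof of Theorem~\ref{thm4ci}.

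The bookkeeping obstacle is that there are $\binom{5}{2}=10$ unordered pairs and each difference can have either sign, so naively there are twenty cases; moreover $f_5$ is not of the pure form $x_k^{\alpha_k}-(\text{monomial})$, so its two degree expressions must be chosen carefully to expose a usable minimal generator. I would organize the cases by which generator index $k$ gets isolated: for a difference involving $f_i$ and $f_j$, one picks the monomial representations so that the resulting relation forces some $(\alpha_k-\text{something})n_k\in\langle\text{the other three}\rangle$ with the coefficient strictly positive and strictly less than $\alpha_k$. The main obstacle will be handling the pairs involving $f_5$, since there the degree does not come from a single generator $\alpha_kn_k$ and one must instead exploit the two alternative expressions $\alpha_{21}n_1+\alpha_{43}n_3=\alpha_{32}n_2+\alpha_{14}n_4$ together with the constraints $\alpha_{ij}<\alpha_i$ to pin down a coefficient below the minimal threshold.

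I expect that in every case the inequality $0<\alpha_{ij}<\alpha_i$ supplied by Bresinsky, combined with the defining minimality of each $\alpha_k$, closes the case immediately once the isolation is done correctly; thus no genuinely new idea beyond the technique of Theorem~\ref{thm4ci} is required, only disciplined case analysis. Since the resolution is symmetric with $\lfloor(k-1)/2\rfloor=1$, checking the $1$-Betti degrees suffices and the higher differences are then automatic, so once the ten differences are shown to avoid $S$ the theorem follows directly from Lemma~\ref{indispensable} and Corollary~\ref{3-4}.
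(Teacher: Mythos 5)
Your proposal is correct and follows essentially the same route as the paper: reduce via Corollary~\ref{3-4} to indispensability of the binomial generators, then for each pair assume $\deg(f_i)-\deg(f_j)=\sum u_mn_m\in S$, choose the monomial representation of each degree so that a single $n_k$ is isolated with positive coefficient $\le\a_{k\ell}<\a_k$, and contradict the minimality of $\a_k$. The paper additionally notes the statement follows from \cite[Corollary 3.13]{koj} and writes out only the representative case $\deg(f_3)-\deg(f_1)$, leaving the remaining pairs (including those involving $f_5$, which your plan rightly flags as needing the two expressions $\a_{21}n_1+\a_{43}n_3=\a_{32}n_2+\a_{14}n_4$) to the same mechanism.
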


\begin{proof} As before Corollary \ref{3-4} reduces the problem to prove that the toric ideal $I_S$ is generated by indispensable binomials which is addressed in \cite[Corollary 3.13]{koj}. As an alternative, we give a direct proof here. 

Assume that $\deg (f_3)-\deg (f_{1}) \in S$, where $f_i$ are as in Theorem \ref{4generated}. Then, by using B in Corollary \ref{cor4} also, we obtain the following $$\a_3n_3-\a_1n_1=\a_{43}n_3-\a_{14}n_4=u_1n_1+u_2n_2+u_3n_3+u_4n_4,$$ for some non-negative integers $u_i$. This means that $$(\a_{43}-u_3)n_3=u_1n_1+u_2n_2+(\a_{14}+u_4)n_4$$ which contradicts to $\a_3$ being the smallest positive integer $u$ satisfying $un_3\in \langle n_1,n_2,n_4 \rangle$, since $\a_3>\a_{43}$ by Theorem \ref{4generated}. One can similarly check the others.
\end{proof}

For numerical semigroups of embedding dimension less than $5$, we have seen that
(strong) indispensability of a minimal free resolution of a Gorenstein $K[S]$ depends only on the first Betti degrees, that is differences of all Betti degrees depend only on the differences of the first Betti degrees. This is no longer true in embedding dimension $5$ as the following illustrates.

\begin{ex} Take the symmetric semigroup $S=\langle 19,27,28,31,32\rangle$. One can see that its $i$-Betti degrees are as follows:\\
\noindent$i=1: 59, 81, 82, 83, 84, 85, 88, 89, 92, 93, 94, 95, 96\\$
$i=2: 109, 110, 111, 112, 113, 113, 114, 115, 115, 116, 116, 117, 119,\\ 120, 120, 121, 121, 122, 123, 123, 124, 125, 126, 127$\\
$i=3: 140, 141, 142, 143, 144, 147, 148, 151, 152, 153, 154, 155, 177$\\
$i=4: 236$

Since the differences of the first Betti degrees do not belong to $S$, we see that $I_S$ is generated by indispensable binomials but it can not have a strongly indispensable minimal free resolution as $\beta_{2,113}=2$  and thus $113-113=0 \in S$.
\end{ex}

\begin{prop} Let $S=\langle n_1,n_2,n_3,n_4 \rangle$ be a $4$-generated pseudosymmetric semigroup. Then $K[S]$ has a strongly indispensable resolution if and only if  the
differences between the 1-Betti degrees and  between the 2-Betti degrees do not belong to $S$.
\end{prop}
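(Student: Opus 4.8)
The plan is to apply Lemma~\ref{indispensable} directly, which characterizes strong indispensability by requiring that the differences of the $i$-Betti degrees avoid $S$ for all $1 \le i \le k-1 = 3$. Unlike the symmetric case, there is no self-duality shortcut available here (the resolution of a pseudosymmetric $K[S]$ is not symmetric, since $\beta_1 = \beta_2 = 5$ but $\beta_3 = 2 \ne 1 = \beta_0$), so one cannot reduce to checking only the first half of the indices. However, the statement we must prove is weaker than a full characterization: it asserts that strong indispensability is equivalent to the differences of the $1$-Betti degrees and the $2$-Betti degrees avoiding $S$. Thus the essential content is that, \emph{for pseudosymmetric semigroups}, the condition on the $3$-Betti degrees is automatic --- it follows from (or is implied by) the conditions on the lower indices. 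So the plan is to show that the differences of the two $3$-Betti degrees $c_1$ and $c_2$ never lie in $S$, regardless of the defining data.

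First I would invoke Corollary~\ref{corpseudo}, which gives explicit formulas for $c_1$ and $c_2$ in terms of $n_1,\dots,n_4$ and the parameters $\alpha_i, \alpha_{21}$. Using the several expressions listed there, I would compute $c_2 - c_1$ (and $c_1 - c_2$) in multiple ways and try to show the difference cannot be written as a non-negative integer combination $\sum u_i n_i$. The standard tool, used repeatedly in the proofs of Theorem~\ref{thm4ci} and Theorem~\ref{thm4nci}, is to assume the difference lies in $S$, rewrite the resulting equation to isolate a coefficient of some $n_j$, and derive a contradiction with one of the minimality conditions defining the $\alpha_i$ (for instance, that $\alpha_3$ is the smallest positive integer with $\alpha_3 n_3 \in \langle n_1, n_2, n_4\rangle$, which is built into Komeda's presentation in Theorem~\ref{kom}). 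Since $K[S]$ is Gorenstein-adjacent (almost symmetric) but $\beta_3 = 2$, the two top Betti degrees $c_1, c_2$ correspond via Lemma~\ref{soc} and Proposition~\ref{bettisoc} to the two pseudofrobenius numbers $c_1 - N$ and $c_2 - N$; I would exploit the fact that two distinct pseudofrobenius numbers of a numerical semigroup never differ by an element of $S\setminus\{0\}$ (if $n, n' \in PF(S)$ with $n' - n \in S\setminus\{0\}$, then $n' = n + s \in S$, contradicting $n' \notin S$). This gives $(c_2 - N) - (c_1 - N) = c_2 - c_1 \notin S$ and symmetrically $c_1 - c_2 \notin S$ essentially for free.

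Having disposed of the $3$-Betti differences unconditionally, the proof reduces to the tautology that strong indispensability holds if and only if the $1$- and $2$-Betti degree differences avoid $S$, which is exactly what Lemma~\ref{indispensable} gives once the $i=3$ case is known to hold automatically. I would structure the writeup so that the pseudofrobenius observation is stated first as the key lemma-style remark, then cite Lemma~\ref{indispensable} to conclude.

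I expect the main obstacle to be the verification that the two $3$-Betti degrees are genuinely distinct and that their difference avoids $S$ in \emph{both} directions. The pseudofrobenius argument handles the case where $c_1 \ne c_2$ cleanly, but one must also confirm $c_1 \ne c_2$ (i.e. $\beta_3$ is supported in two distinct degrees, not one with multiplicity two) --- this follows because the two pseudofrobenius numbers $g(S)/2$ and $g(S)$ of a pseudosymmetric semigroup are always distinct. The remaining delicate point is making sure the phrase ``differences do not belong to $S$'' is interpreted as differences of \emph{distinct} Betti degrees, so that the trivial difference $0$ (which does lie in $S$) is excluded by the convention; under this reading the result follows directly, and the only real work is the elementary $PF(S)$ computation above.
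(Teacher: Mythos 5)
Your proposal is correct and follows essentially the same route as the paper: by Proposition~\ref{bettisoc} the two $3$-Betti degrees are $g(S)/2+N$ and $g(S)+N$, so their difference is $g(S)/2\in PF(S)$, which never lies in $S$, making the $i=3$ condition of Lemma~\ref{indispensable} automatic. Your slightly more general observation (distinct pseudofrobenius numbers never differ by a nonzero element of $S$) and your care about distinguishing $c_1\ne c_2$ are both fine and amount to the same one-line argument the paper gives.
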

\begin{proof} Let $N= \sum_{i=1}^4 n_i$. By Proposition \ref{bettisoc} the 3-Betti degrees are $g(S)/2+N$ and $g(S)+N$. Thus in this case the difference between the 3-Betti degrees, $g(S)/2$, is never in $S$.
\end{proof}
 The two conditions in the proposition above are necessary as the following examples reveal.
\begin{ex}
The semigroup $S=\langle 13,9,11,14\rangle$ of example~\ref{pseudoex} does not have a strongly indispensable resolution, since non-negative differences between the $1$-Betti degrees are $2,3,5,10,12,15,17,20$ but $20\in S$, even though non-negative differences between the $2$-Betti degrees, i.e. $1,2,3,4,5,6,7$, do not belong to $S$.

 For $(\a_1,\a_2,\a_3,\a_4,\a_{21})=(5,2,2,2,2)$, we get by Theorem \ref{kom} that $S=\langle5,12,11,14\rangle$ is pseudosymmetric. Non-negative differences of the first Betti degrees are $1,2,3,4,6$ which do not belong to $S$ and thus $I_S$ is generated minimally by indispensable binomials. But non-negative differences of the second Betti degrees are $1,2,3,4,6,7,8,9,10$ and since $10 \in S$ it follows that the minimal free resolution of $K[S]$ is not strongly indispensable.

 Finally, $S=\langle5,11,8,12\rangle$ is pseudosymmetric again by Theorem \ref{kom}, where the numbers $n_i$ are determined by $(\a_1,\a_2,\a_3,\a_4,\a_{21})=(4,2,2,2,2)$. It can be seen that non-negative differences of the first Betti degrees are $1,2,3,4,6,7,8$ with $8\in S$, and non-negative differences of the second Betti degrees are $1,3,4,6,7,10$ with $10\in S$. Therefore, both conditions do not hold and $I_S$ does not have a strongly indispensable minimal free resolution.
\end{ex}

\section*{Acknowledgements} We would like to thank the referee for very helpful
suggestions. Most of the examples are computed by using the computer algebra system Macaulay $2$, see \cite{Mac2}.

\bigskip

\noindent Valentina Barucci, University of Rome 1, barucci@mat.uniroma1.it

\medskip

\noindent Ralf Fr\"oberg, University of Stockholm, ralff@math.su.se

\medskip

\noindent Mesut \c{S}ahin, \c{C}ank{\i}r{\i} Karatekin University, mesutsahin@karatekin.edu.tr

\begin{thebibliography}{Dillo83}

\bibitem{br} H.~Bresinsky, {\it Symmetric semigroups of integers generated by 4 elements}, Manuscripta Math. {\bf 17} (1975), 205--219.

\bibitem{bu-ei} D.~Buchsbaum, D.~Eisenbud, {\it What makes a complex exact?}, J. Algebra {\bf 25} (1973), 259--268.

\bibitem{be} D. A. Buchsbaum and D. Eisenbud, \textit{Algebra structures for finite free resolutions,
and some structure theorems for ideals of codimension $3$}, Amer. J. Math. 99 (1977),
447--485.

\bibitem{haraCM} H.~Charalambous, A.~Thoma, {\it On
simple A-multigraded minimal resolutions},
Contemporary Mathematics {\bf 502} (2009), 33--44.

\bibitem{haraJA} H.~Charalambous, A.~Thoma, {\it On the generalized Scarf complex of lattice ideals},
J. Algebra {\bf 323} (2010), 1197--1211.

\bibitem{delorme} C.~Delorme, {\it Sous-mono\"{\i}des d'intersection compl\`{e}te de $N$},
Ann. Sci. \'{E}cole Norm. (4) 9 No.1 (1976), 145-154.

\bibitem{den} G.~Denham, {\it Short generating functions for some semigroup algebras}, Electr. J. Combinatorics {\bf 10}  (2003), R36.

\bibitem{ei} D.~Eisenbud, {\it Commutative algebra. With a view toward algebraic geometry}, Graduate Texts in Mathematics, {\bf 150},
Springer Verlag, New York 1995.

\bibitem{garcia-ojeda} P.~A.~Garcia-Sanchez, I. Ojeda, {\it Uniquely presented finitely generated commutative monoids},
Pac. J. Math. {\bf 248}, No. 1 (2010), 91--105.

\bibitem{Mac2} D. Grayson, M. Stillman, Macaulay 2--A System for Computation in Algebraic Geometry and Commutative Algebra. http://www.math.uiuc.edu/Macaulay2.

\bibitem{he} J.~Herzog, {\it Generators and relations of abelian semigroups and semigroup rings}, Manuscripta Math. {\bf 3} (1970), 175--193.

\bibitem{koj} A. Katsabekis, I. Ojeda, {\it An indispensable classification of monomial curves in
$\mathbb{A}^4 $}, http://arxiv.org/pdf/1103.4702.pdf

\bibitem{ko} J.~Komeda, {\it On the existence of Weierstrass points with a certain semigroup generated by 4 elements}, Tsukuba J. Math. {\bf 6} (1982), 237--279.

\bibitem{ku} E.~Kunz, {\it The value-semigroup of a one-dimensional Gorenstein ring}, Proc. Amer. Math. Soc. {\bf 25} (1970), 748--751.

\bibitem{peeva} I. Peeva, B. Sturmfels, {\it Generic lattice ideals}, J. Amer. Math. Soc. {\bf 11} (1998), 363--373.

\bibitem{ros} J. C. Rosales, {\it On presentations of subsemigroups of $\mathbb{N}^n$},
Semigroup Forum {\bf 55} (1997), 152-159.

\bibitem{ro-ga}  J.~C.~Rosales, P.~A~Garcia-Sanchez, {\it Numerical semigroups with embedding dimension three}, Arch. Math. {\bf 83} (2004), 488--496.

\bibitem{sta} R. P. Stanley, \textit{Hilbert functions of graded algebras}, Adv. Math. {\bf 28}
(1978), 57--83.

\bibitem{tak} A. Takemura and S. Aoki, \textit{Some characterizations of minimal Markov basis for sampling from discrete conditional distributions}, Ann. Inst. Statist. Math. 56:1 (2004), 1?17.

\end{thebibliography}
\end{document}